\numberwithin{equation}{section}
\newtheorem{theorem}{Theorem}[section]
\newtheorem*{theorem*}{Theorem}
\newtheorem{lemma}[theorem]{Lemma}
\newtheorem*{corollary*}{Corollary}
\theoremstyle{definition}
\newtheorem{definition}[theorem]{Definition}
\newtheorem{remark}[theorem]{Remark}
\providecommand{\NN}{\ensuremath{\mathds{N}}} 
\providecommand{\RR}{\ensuremath{\mathds{R}}} 
\providecommand{\XX}{\ensuremath{\mathds{X}}} 
\providecommand{\ZZ}{\ensuremath{\mathds{Z}}} 
\providecommand{\sB}{\ensuremath{\mathscr{B}}} 
\providecommand{\sF}{\ensuremath{\mathscr{F}}} 
\providecommand{\sN}{\ensuremath{\mathscr{N}}} 
\providecommand{\sX}{\ensuremath{\mathscr{X}}} 
\providecommand{\cB}{\ensuremath{\mathcal{B}}} 
\providecommand{\cS}{\ensuremath{\mathcal{S}}} 
\providecommand{\cK}{\ensuremath{\mathcal{K}}} 
\providecommand{\cL}{\ensuremath{\mathcal{L}}} 
\providecommand{\beast}{\ensuremath{\mathcal{B}_{\infty}}} 
\providecommand{\bbH}{\ensuremath{\mathbb{H}}} 
\providecommand{\bbS}{\ensuremath{\mathbb{S}}} 
\providecommand{\dsP}{{\ensuremath{\mathds{P}}}} 
\providecommand{\dsE}{{\ensuremath{\mathds{E}}}} 
\providecommand{\conv}{\operatorname{conv}} 
\newcommand{\cech}{\operatorname{C}} 
\newcommand{\vietoris}{\operatorname{VR}} 
\newcommand{\dist}{\operatorname{d}} 
\newcommand{\real}{\operatorname{real}} 
\newcommand{\Haus}{\mathrm{Haus}}
\DeclarePairedDelimiterX{\abs}[1]{\lvert}{\rvert}{#1} 
\DeclarePairedDelimiterX{\norm}[1]{\lVert}{\rVert}{#1} 
\DeclarePairedDelimiterX{\skp}[2]{\langle}{\rangle}{#1, #2} 
\DeclarePairedDelimiterX{\sskp}[1]{\langle}{\rangle}{#1} 
\DeclarePairedDelimiterX{\cbras}[1]{\{}{\}}{{#1}} 
\DeclarePairedDelimiterX{\rbras}[1]{(}{)}{{#1}} 
\DeclarePairedDelimiterX{\sbras}[1]{[}{]}{{#1}}	
\DeclarePairedDelimiterX{\ceil}[1]{\lceil}{\rceil}{#1} 
\DeclarePairedDelimiterX{\floor}[1]{\lfloor}{\rfloor}{#1} 
\DeclarePairedDelimiterXPP{\Prob}[1]{\dsP}{(}{)}{}{#1} 
\DeclarePairedDelimiterXPP{\E}[1]{\dsE}{[}{]}{}{#1} 
\title{Gigantic random simplicial complexes}
\author[J.~Grygierek]{Jens Grygierek}
\address{Universit\"at Osnabr\"uck, Institut f\"ur Mathematik, 49069 Osnabr\"uck, Germany}
\email{jgrygierek@uni-osnabrueck.de}
\author[M.~Juhnke-Kubitzke]{Martina Juhnke-Kubitzke}
\email{juhnke-kubitzke@uni-osnabrueck.de}
\author[M.~Reitzner]{Matthias Reitzner}
\email{mreitzne@uni-osnabrueck.de}
\author[T.~R\"omer]{Tim R\"omer}
\email{troemer@uni-osnabrueck.de}
\author[O.~R\"ondigs]{Oliver R\"ondigs}
\email{oliver.roendigs@uni-osnabrueck.de}
\begin{document}

\begin{abstract}
  We provide a random simplicial complex by applying standard
  constructions to a Poisson point process in Euclidean space.
  It is gigantic in the sense that~--~up to homotopy equivalence~--~it almost surely
  contains infinitely many copies of every
  compact topological manifold, 
  both in isolation and in percolation.
\end{abstract}


\maketitle

%
%
%
%

\section{Introduction}
\label{sec:intr-main-results}

A debate on prominent and interesting topological spaces 
would
most likely mention topological manifolds, that is,
topological Hausdorff spaces which are locally homeomorphic
to a Euclidean space. 
More subtle is the question of randomly choosing topological manifolds,
a task subject to active research in this century.
The restriction
to special types of manifolds -- like Riemann surfaces
\cite{brooks-makover}, 3-manifolds \cite{dunfield-thurston}, or
configuration spaces \cite{farber-kappeler} -- 
indicates its subtlety. 
Even collections of submanifolds of a given topological manifold
tend to be very unwieldy, at least through the eyes of
probability theory. This article presents a combinatorial
solution, that is, a naturally constructed
random simplicial complex, which is rich
enough to realize any compact topological manifold, at least up
to homotopy equivalence.

Recall that a result of Milnor 
\cite[Theorem 1]{milnor.cw}
states that any topological manifold is homotopy equivalent
to a countable locally finite simplicial complex. 
Examples of noncompact topological manifolds
like a connected sum of infinitely many tori show
that finiteness cannot be expected in general.
However, a refinement 
due to Kirby and Siebenmann \cite{kirby-siebenmann.hauptvermutung},
\cite{kirby-siebenmann.essays} states 
that any compact topological manifold is homotopy equivalent
to a finite simplicial complex. Hence from the viewpoint of 
homotopy theory, finite simplicial complexes constitute a
rich and highly interesting class of topological spaces.
Note furthermore that Manolescu, based on work
of Galewski-Stern \cite{GalewskiStern} (among others), disproved the
Triangulation Conjecture: For every integer $d\geq 5$, there
exists a compact topological manifold of dimension $d$ which is 
not homeomorphic to a finite simplicial 
complex \cite{manolescu}. This perhaps unexpected
result may be viewed as an argument
towards considering topological manifolds up to homotopy
equivalence. Up to dimension three, every topological manifold
is homeomorphic to a simplicial complex \cite{moise}. 
In the notoriously exotic
dimension four, the Casson invariant proves that  
Freedman's $E_8$ manifold \cite{freedman} 
is a compact topological manifold not
homeomorphic to a simplicial complex, as explained for
example in~\cite{akbulut-mccarthy}. 

The aforementioned debate thus should also list 
finite simplicial complexes as examples of
interesting topological spaces. 
Our work provides a natural construction of a random simplicial complex
associated with Euclidean space $\RR^d$ which ``contains all of them''
in the following two different ways:
\begin{description}
\item[Lonely Complex] 
  Almost surely, it contains infinitely many copies of any given 
  $d$-embeddable
  finite simplicial complex as isolated components.
\item[Giant Beast] Almost surely, it
  contains an unbounded connected component having infinitely many
  copies of any 
  given $d$-embeddable finite simplicial complex as wedge summands.
\end{description}
Here a simplicial complex $\cK$ is called $d$-{\em embeddable\/} 
if there exists a piecewise linear embedding 
$\cK\to \RR^d$. Recall that a finite simplicial complex of
dimension $d$ is at least $(2d+1)$-embeddable. Recall also that
the wedge sum of two pointed simplicial complexes (or general pointed
topological spaces) is the one-point union at the respective 
basepoints. Up to homotopy it is the same as a ``whiskered
one-point union'': joining the two basepoints
via an additional edge.
Before giving both
the construction and a precisely formulated theorem, a consequence on homology
groups is readily obtained. 

\begin{theorem*}
For every $n\in \NN$  there exists a random simplicial complex whose homology 
almost surely contains the homology of any finite simplicial complex of dimension at most $n$ infinitely many times as a direct summand.
\end{theorem*}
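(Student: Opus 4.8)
The plan is to obtain this statement as an essentially immediate consequence of the Lonely Complex property of the random simplicial complex attached to $\RR^d$, once the ambient dimension $d$ is chosen large enough in terms of $n$. Concretely, fix $n\in\NN$ and set $d:=2n+1$. As recalled above, every finite simplicial complex of dimension at most $n$ is $(2n+1)$-embeddable, hence $d$-embeddable. Let $X$ denote the random simplicial complex associated with $\RR^d$ furnished by the construction below; the Lonely Complex assertion then says that, for each fixed $d$-embeddable finite simplicial complex $\cK$, almost surely $X$ contains infinitely many subcomplexes, pairwise disjoint, each homotopy equivalent to $\cK$, and each isolated in $X$ in the sense that it is a union of connected components of $X$ (no simplex of $X$ joins it to the rest).

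Next I would uniformize over the quantifier ``any finite simplicial complex of dimension at most $n$''. Up to simplicial isomorphism there are only countably many such complexes, since each has finitely many vertices. For every one of them the Lonely Complex event has probability $1$, so the countable intersection of these events still has probability $1$. On this almost sure event, $X$ simultaneously contains, for every isomorphism type $\cK$ of a finite simplicial complex of dimension at most $n$, an infinite family $\cL_1^{\cK},\cL_2^{\cK},\dots$ of pairwise disjoint, isolated subcomplexes, each homotopy equivalent to $\cK$.

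Finally I would pass to homology. Because each $\cL_i^{\cK}$ is a union of connected components of $X$, the space $X$ decomposes as the disjoint union of the $\cL_i^{\cK}$ and the remaining subcomplex, and simplicial homology is additive over disjoint unions in every degree, so $H_*(X)\cong\bigoplus_i H_*(\cL_i^{\cK})\oplus H_*(\text{rest})$. Homotopy invariance of homology gives $H_*(\cL_i^{\cK})\cong H_*(\cK)$ for every $i$, whence $H_*(\cK)$ occurs infinitely often as a direct summand of $H_*(X)$, which is exactly the assertion.

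The genuinely substantial input is the Lonely Complex property itself: the construction of $X$ and the proof that every $d$-embeddable finite simplicial complex appears, up to homotopy, as an isolated piece infinitely often; that is the content of the main theorem below and is where all the probabilistic and combinatorial work lies. Granting it, the deduction above is bookkeeping: choosing $d=2n+1$, a countability argument, and additivity together with homotopy invariance of homology. The only mild point worth flagging is that a disconnected $\cK$ will in general be realized not by a single connected component but by a finite union of connected components that is collectively separated from the rest of $X$; this is precisely what ``isolated components'' is meant to cover, and it leaves the homology computation unchanged.
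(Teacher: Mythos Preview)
Your proposal is correct and follows exactly the approach the paper intends: the paper itself does not give a separate proof of this theorem but simply remarks that it is weaker than (and hence follows from) the Lonely Complex statement, since homology is determined by homotopy type. Your argument makes this deduction explicit---choosing $d=2n+1$ to guarantee $d$-embeddability, passing to the countable intersection over isomorphism types, and invoking additivity of homology over isolated pieces together with homotopy invariance---which is precisely the bookkeeping the paper leaves to the reader.
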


This theorem is weaker than the statements on the lonely complex and the giant beast in the sense that homology is determined
by homotopy type, but not vice versa.
Now to its 
construction, coming in two flavours: 
Any subset -- which in our case is locally finite -- of $\RR^d$ 
gives rise to at least two types of abstract simplicial complexes, a 
{\em Vietoris-Rips\/} complex and a {\em \v{C}ech\/} complex, depending  on the ambient Euclidean metric and a real positive parameter $\rho$. The subset itself is a random one, given by a stationary Poisson point process $\eta$ in $\RR^d$ with intensity measure $t\cdot \Lambda_d$,
where $0<t\in \RR$ and $\Lambda_d$ refers to the Lebesgue measure.
Precise definitions
are given in Section~\ref{sec:simplicial-complexes}, namely
Definitions~\ref{def:vr},~\ref{def:cc}, and 
Subsection~\ref{sec:poiss-point-proc}. 
The resulting theorems 
on random Vietoris-Rips and \v{C}ech complexes, respectively, 
also come in two flavours: isolation and percolation. For the
sake of brevity, only the \v{C}ech version will be stated in
this introduction; see Theorems \ref{thm:iso_vr} and~\ref{thm:perc_vr}
for the statements on the random Vietoris-Rips complex.

\begin{theorem*}[Lonely complexes]
Let $\cL$ be a $d$-embeddable simplicial complex. Then for  every stationary Poisson point process on $\RR^d$, 
any random \v{C}ech complex 
contains almost surely infinitely many 
  isolated simplicial complexes that are homotopy equivalent to $\cL$.
\end{theorem*}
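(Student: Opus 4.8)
I plan to combine a geometric realisation of $\cL$ as a ``rigid'' \v{C}ech complex with a standard argument planting infinitely many disjoint copies of it inside the Poisson process $\eta$. First I would use $d$-embeddability to produce a finite point set $P=\{p_1,\dots,p_m\}\subset\RR^d$ such that, at the \emph{prescribed} parameter $\rho$, the \v{C}ech complex $\cech_\rho(P)$ is homotopy equivalent to $\cL$ and its combinatorial type does not change under sufficiently small perturbations of the points. Concretely, fix a piecewise linear embedding of the compact polyhedron $\abs{\cL}$ into $\RR^d$ and let $P_0$ be the vertex set of a triangulation of this embedded polyhedron all of whose simplices have diameter less than $\delta$. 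For $\delta$ small the union of balls $U=\bigcup_{p\in P_0}B(p,2\delta)$ is a thin open neighbourhood of $\abs{\cL}$, hence deformation retracts onto it; since the balls are convex, the Nerve Lemma identifies $\cech_{2\delta}(P_0)$ with the nerve of this good cover, so $\cech_{2\delta}(P_0)\simeq U\simeq\abs{\cL}\simeq\cL$. Choosing $\delta$ so that $2\delta$ is none of the finitely many circumradii of subsets of $P_0$ makes the combinatorial type of $\cech_{2\delta}(P_0)$ locally constant; rescaling $P_0$ by $\rho/(2\delta)$ then produces $P$ and some $\varepsilon\in(0,\rho)$ so that $\cech_\rho(Q)\simeq\cL$ for every $Q=\{q_1,\dots,q_m\}$ with $\norm{q_i-p_i}<\varepsilon$ for all $i$, and so that every such $Q$ lies within distance $\rho$ of $\conv(P)$.

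Next I would fix the bounded window $W=\{y\in\RR^d:\dist(y,\conv(P))\le 3\rho\}$, shrinking $\varepsilon$ if needed so that the balls $B(p_i,\varepsilon)$ are pairwise disjoint and lie in $W$, and for $x\in\RR^d$ consider the event $A_x$ that $\eta$ has exactly one point in each translate $x+B(p_i,\varepsilon)$ and no point in $(x+W)\setminus\bigcup_i(x+B(p_i,\varepsilon))$. On $A_x$ the set $Q=\eta\cap(x+W)$ is an $\varepsilon$-perturbation of $x+P$, so $\cech_\rho(Q)\simeq\cL$ by the first step; moreover any point of $\eta$ outside $x+W$ has distance more than $2\rho$ from every point of $Q$, because a point within $2\rho$ of some point of $x+B(p_i,\varepsilon)$ lies within $2\rho+\varepsilon<3\rho$ of $x+\conv(P)$ and hence inside $x+W$. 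Therefore no simplex of $\cech_\rho(\eta)$ meets both $Q$ and its complement, so $\cech_\rho(Q)$ is a union of connected components of $\cech_\rho(\eta)$ --- an isolated copy of $\cL$ up to homotopy.

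Finally, by stationarity $\Prob{A_x}$ is independent of $x$, and it is positive because the point counts of $\eta$ on disjoint Borel sets are independent Poisson variables of positive finite mean (the intensity being $t\,\Lambda_d$ with $t>0$); in fact $\Prob{A_x}=\bigl(t\,\Lambda_d(B(0,\varepsilon))\bigr)^{m}e^{-t\,\Lambda_d(W)}=:c>0$. Picking $x_1,x_2,\dots\in\RR^d$ with the windows $x_j+W$ pairwise disjoint makes the events $A_{x_j}$ independent with common probability $c$, so $\sum_j\Prob{A_{x_j}}=\infty$ and the Borel--Cantelli lemma yields that almost surely infinitely many $A_{x_j}$ occur; each contributes an isolated subcomplex of $\cech_\rho(\eta)$ homotopy equivalent to $\cL$, and these are distinct because they sit in disjoint windows.

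The Poisson bookkeeping and the Borel--Cantelli step are routine; the part I expect to be the real obstacle is the first one, namely upgrading the mere existence of a PL embedding $\cL\to\RR^d$ to a finite point configuration whose \v{C}ech complex \emph{at the prescribed radius} is simultaneously homotopy equivalent to $\cL$ and combinatorially robust under small perturbations --- this robustness being essential since a Poisson process almost surely places no point at any prescribed location, so only a perturbation-stable target can be planted.
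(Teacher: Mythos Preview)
Your proposal is correct and follows essentially the same strategy as the paper: first produce a perturbation-stable (generic) \v{C}ech representation of $\cL$ at the prescribed radius, then plant independent copies in disjoint translated windows and invoke Borel--Cantelli. The only cosmetic differences are that the paper packages the robustness step separately as Theorem~\ref{thm:all_is_cr} (achieving genericity by rescaling the point set rather than by adjusting the radius to avoid critical circumradii) and uses axis-aligned boxes rather than convex-hull neighbourhoods as the isolating windows.
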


\begin{theorem*}[The giant beast]
 Let $\cK$ be a $d$-embeddable simplicial complex. 
 Assume that the  random \v{C}ech complex of a stationary Poisson point process on $\RR^d$
  contains an unbounded connected simplicial complex $\cB_\infty$.  
Then the random  \v{C}ech complex  contains almost surely infinitely many subcomplexes 
that are homotopy equivalent to $\cK$ and only connected to $\beast$ by an edge.
\end{theorem*}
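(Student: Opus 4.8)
The plan is to reuse the configuration underlying the lonely‑complexes theorem and to show, by Palm calculus and ergodicity, that the beast $\beast$ almost surely carries such a configuration, attached to it by a single edge, at infinitely many well‑separated places. Fix the intensity $t>0$ and the parameter $\rho>0$. The event $\{\beast\text{ exists}\}$ is invariant under the translations of the stationary Poisson point process $\eta$, which act ergodically, so it has probability $0$ or $1$; in the first case there is nothing to prove, so assume we are in the supercritical regime, where moreover the unbounded component is almost surely unique, so that $\beast$ is a translation‑equivariant functional of $\eta$.

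\emph{A pendant dongle for $\cK$.} Since $\cK$ is $d$-embeddable, fix a piecewise linear realization $|\cK|\hookrightarrow\RR^d$ and rescale it so large that, for a sufficiently fine finite sample $P$ of its image $S$, the thickening $\bigcup_{p\in P}B(p,\rho)$ is a regular neighbourhood of $S$; by the nerve lemma this thickening is $\cech(P,\rho)$, so $\cech(P,\rho)\simeq S\simeq\cK$, and this homotopy type survives perturbing $P$ by at most some $\delta>0$. This is precisely the gadget used for the lonely complexes. Enlarge it to a \emph{dongle} by adjoining one \emph{connector} point $c$ placed so that $\cech(P\cup\{c\},\rho)$ arises from $\cech(P,\rho)$ by attaching a single pendant edge at one vertex $v\in P$ (so $\cech(P\cup\{c\},\rho)\simeq\cK$ still), and fix a small \emph{docking ball} $U$ such that any $w\in U$ is within edge‑range of $c$, outside edge‑range of all of $P$, has part of its edge‑range disjoint from that of $P\cup\{c\}$, and such that adjoining $w$ to $P\cup\{c\}$ creates exactly the edge $cw$ and no new higher simplex. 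All of this persists under $\delta$-perturbations, and $P\cup\{c\}\cup U$ with its $\rho$-neighbourhood lies in a ball of some radius $R=R(\cK,\rho,t)$.

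\emph{Decorated beast points and the key estimate.} Call $x\in\eta$ \emph{decorated} if $x\in\beast$ and the points of $\eta$ in $B(x,R)$ consist of $x$ together with a $\delta$-perturbed dongle $P'\cup\{c'\}$ having $x$ in its docking ball, such that no point of $\eta$ outside $P'\cup\{c'\}$ is within edge‑range of any point of $P'$, and $x$ is the only $\eta$-neighbour of $c'$ apart from the points of $P'\cup\{c'\}$. Every decorated $x$ then exhibits the full subcomplex of $\cech(\eta,\rho)$ on $P'\cup\{c'\}$, which is isomorphic to $\cech(P'\cup\{c'\},\rho)\simeq\cK$ and is joined to the rest of $\cech(\eta,\rho)$ by exactly the edge $c'x$; since $x\in\beast$ drags $P'\cup\{c'\}$ into $\beast$, this is a subcomplex of $\beast$ attached to the remainder of $\beast$ by precisely one edge. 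It thus suffices to prove
\[
  \dsP_{0}\big(\text{$0$ is decorated}\big)>0 ,
\]
$\dsP_0$ being the Palm probability of $\eta$. First, $\dsP_0(0\in\beast)>0$: otherwise Palm--Mecke forces $\dsE\,\#(\eta\cap[0,1)^d\cap\beast)=0$, so by stationarity $\beast$ has almost surely no vertices, contradicting its existence. Second, the purely local part of ``decorated'' — a $\delta$-perturbed dongle with $x=0$ docking onto it, plus the stipulated emptiness around $P'$ and $c'$ — constrains $\eta$ only on a bounded set $\Omega\ni0$, hence has positive $\dsP_0$-probability. Conditionally on this local part, $\eta$ outside $\Omega$ is again Poisson and independent, the origin keeps a nonempty unconstrained window $W$ inside its edge‑range, and ``$0\in\beast$'' becomes equivalent to: some point of $\eta\cap W$ lies in an unbounded component of the \v{C}ech complex of $\eta$ restricted to $\RR^d\setminus\Omega$. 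This last event has probability bounded below uniformly, because deleting the points in a bounded set cannot destroy all unbounded components, so the restricted process is still supercritical, and for a supercritical Boolean process the expected number of infinite‑cluster points in a fixed bounded region is positive. Multiplying the first positivity statement by the conditional one yields the displayed inequality. This conditional lower bound — taming the global object $\beast$ by these local consequences of supercriticality — is the genuinely delicate point of the proof.

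\emph{Conclusion.} The point process obtained from $\eta$ by marking each point according to whether it is decorated is a factor of the stationary ergodic process $\eta$, hence stationary and ergodic; by the previous step and Palm--Mecke the decorated points have positive intensity. By the multidimensional ergodic theorem the number of decorated points in a large ball tends to infinity almost surely, so there are almost surely infinitely many; being infinitely many points of a locally finite set they are unbounded, and we extract a subsequence $x_1,x_2,\dots$ with pairwise distances exceeding $2R$. The corresponding dongles are pairwise disjoint, giving infinitely many distinct subcomplexes, each homotopy equivalent to $\cK$ and each connected to $\beast$ by exactly one edge. The Vietoris--Rips statement, Theorem~\ref{thm:perc_vr}, follows identically, adapting the combinatorics of the dongle to the Vietoris--Rips convention.
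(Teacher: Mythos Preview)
Your route differs from the paper's. The paper places the target pendant in a lattice of disjoint boxes and applies a second-moment Borel--Cantelli lemma; since the events ``the pendant at site $\theta$ is attached to $\beast$'' are not independent, the bulk of the work is a half-space decoupling argument (Lemma~\ref{lem:prob_dependence_bound}) controlling the covariances via uniqueness of the infinite cluster. You instead aim for a single Palm estimate $\dsP_{0}(0\text{ is decorated})>0$ and let ergodicity of the Poisson process manufacture infinitely many decorated points. This is a legitimate alternative strategy, and your dongle-plus-docking construction parallels the paper's linkage-plus-shell geometry; if the Palm estimate holds, ergodicity finishes the job cleanly and avoids the covariance bookkeeping entirely.

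The gap is exactly at the Palm estimate, in the step you yourself flag as ``genuinely delicate.'' After conditioning on the local dongle configuration in $\Omega$, you correctly reduce ``$0\in\beast$'' to the event that the infinite cluster of $\cech(\eta|_{\RR^d\setminus\Omega},\rho)$ meets the window $W$, and you justify positivity by asserting that ``for a supercritical Boolean process the expected number of infinite-cluster points in a fixed bounded region is positive.'' But $\eta|_{\RR^d\setminus\Omega}$ is \emph{not stationary}---it has a hole at $\Omega$---so this density statement, which for the full process is an immediate consequence of translation invariance (this is precisely the content of the paper's Lemma~\ref{lem:c_H}), is unavailable here. Knowing that the restricted process almost surely has an unbounded component does not by itself force that component to come within $\rho$ of the \emph{specific} window $W$, which sits right against the hole; a priori the infinite cluster of $\eta|_{\RR^d\setminus\Omega}$ could stay bounded away from $\Omega$. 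Closing this gap requires a genuine extra ingredient: for instance, pick a half-space $H$ with $\Omega\subset\RR^d\setminus H$ and $\Lambda_d(W\cap H)>0$, and invoke that the supercritical Boolean model restricted to $H$ still percolates with positive density throughout $H$ (a continuum Grimmett--Marstrand-type statement), or run a careful finite-energy local-modification argument rerouting the cluster through $W$. The paper sidesteps this trap by keeping its ``hit'' event $H_\theta$ phrased for the full stationary $\beast$, so that positivity comes for free from stationarity; the price is the dependence between the local and global events, which is what forces the modified Borel--Cantelli and the half-space decoupling of Lemma~\ref{lem:prob_dependence_bound}.
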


Observe that by the groundbreaking work of Meester and Roy \cite{meester-roy-art, meester-roy-book} the question whether there is an unbounded connected complex and in particular its uniqueness is well understood.
For each $\rho>0$ there exists $t_{\text{perc}}(\rho)>0$, 
such that for every stationary Poisson point process $\eta$ on $\RR^d$ 
with intensity $t > t_{\text{perc}}(\rho)$ the random \v{C}ech complex $\cech(\eta, \rho)$ 
contains one unbounded connected simplicial complex $\beast$, see also Theorem \ref{th:percolation}.

It is worth noting that every finite  simplicial
complex $\cK$ admits
a geometric realization $\real(\cK)$ in  Euclidean space and 
a real number $\rho>0$ such that the \v{C}ech complex associated with
a suitable finite subset of $\real(\cK)$ and $\rho$ is
homotopy equivalent to $\cK$. Hence, for a suitable $d$, the preceding results apply to 
any finite simplicial complex, up to homotopy equivalence. 

One distinguishing feature of our model, in comparison with the extensive
list of random simplicial complexes given for example in 
\cite{linial-meshulam}, \cite{meshulam-wallach} or \cite{kahle.clique}
(see Kahle's chapter in \cite{handbook-discrete} for an overview and further references,
also to other survey articles), is its infinite size. 
Most of the previous investigations and results concentrated on finite random simplicial complexes
which may be obtained by restricting to some bounded subset in $\RR^d$. 
Important results in this direction which we want to emphasize in our context are results on Betti numbers of random simplicial complexes due to 
Kahle \cite{Kahle2011}, Kahle and Meckes (proving limit theorems) \cite{KahleMeckes2013, KahleMeckes2016}, Decreusefond et.al. \cite{Decreusefondetal2011}, and recently Adler, Subag and Yogeshwaran \cite{Yogeshwaran-Subag-Adler}, see also \cite{Yogeshwaran-Adler} for results on more general point processes. These results should be compared with the following immediate consequence of our main results.

\begin{corollary*}
Almost surely any list of $d$ natural numbers is
realized infinitely many times as  Betti numbers of isolated subcomplexes, and if a giant component 
exists also as Betti numbers of wedge summand 
subcomplexes of our random simplicial complex. 
\end{corollary*}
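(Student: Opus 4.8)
The plan is to reduce the statement to the Lonely Complexes and Giant Beast theorems by producing, for every prescribed finite list of Betti numbers, one finite $d$-embeddable simplicial complex realising exactly that list; a wedge of spheres does the job. Given natural numbers $b_0, b_1, \dots, b_{d-1}$ with $b_0 \ge 1$, I would let $\cL$ be the simplicial complex built from $b_0$ disjoint points, organised into a tree, with $b_i$ triangulated copies of $S^i$ (boundaries of $(i+1)$-simplices) attached to one vertex by disjoint whiskers, for each $1 \le i \le d-1$. This is a finite simplicial complex; and since $S^{i} \subset \RR^{i+1} \subseteq \RR^{d}$ for $i \le d-1$, one can place the whiskered spheres in pairwise disjoint balls joined by disjoint arcs, so $\cL$ PL-embeds in $\RR^d$ and is therefore $d$-embeddable. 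Because reduced homology carries wedges to direct sums, $\beta_0(\cL) = b_0$, $\beta_i(\cL) = b_i$ for $1 \le i \le d-1$, and all higher Betti numbers vanish.

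With such an $\cL$ fixed, the conclusion follows at once. By the Lonely Complexes theorem the random \v{C}ech complex almost surely contains infinitely many pairwise disjoint isolated subcomplexes, each homotopy equivalent to $\cL$; since Betti numbers are homotopy invariants, each such subcomplex realises the list $(b_0, \dots, b_{d-1})$, so that list is realised infinitely often. If the random \v{C}ech complex possesses an unbounded component $\beast$, the Giant Beast theorem instead produces infinitely many subcomplexes homotopy equivalent to $\cL$, each joined to $\beast$ by a single edge; the connected component of any one of them is homotopy equivalent to $\beast \vee \cL$, so its reduced homology splits off $\widetilde{H}_{\ast}(\cL)$, and the Betti numbers of $\cL$ appear — again infinitely often — as a direct summand of its homology, i.e.\ as Betti numbers of a wedge-summand subcomplex.

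I do not anticipate a serious obstacle; the one point that needs care is the bookkeeping of dimensions. A simplicial complex that PL-embeds in $\RR^d$ has dimension at most $d$ and, passing to a regular neighbourhood, is homotopy equivalent to a compact $d$-manifold with nonempty boundary, hence has vanishing top reduced homology; so a nonzero $d$-th Betti number cannot be prescribed inside the model attached to $\RR^d$ itself, which is why the ``list of $d$ natural numbers'' is to be read as $(\beta_0, \dots, \beta_{d-1})$. If one wishes to prescribe the full list $(\beta_0, \dots, \beta_d)$ instead, it suffices to run the same construction inside the random complex associated with $\RR^{d+1}$ — or, with no attempt at economy, with $\RR^{2d+1}$, in which every finite $d$-dimensional complex embeds by the remark recalled in the introduction. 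Apart from this accounting, everything is supplied by the two theorems already established together with the elementary homology of wedges of spheres.
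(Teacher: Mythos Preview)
Your proposal is correct and matches the paper's reasoning: the paper does not give an explicit proof of this corollary, calling it an ``immediate consequence'' of the main theorems, with the supporting observation (in the remark following Lemma~\ref{lem:vr-1skeleton-cech}) that wedges of spheres realise any prescribed list of Betti numbers and are $d$-embeddable in the appropriate range. Your construction via whiskered boundaries of simplices is exactly this idea spelled out; the only blemish is the phrase ``$b_0$ disjoint points, organised into a tree,'' which is self-contradictory---you simply want $b_0$ connected components, e.g.\ one component carrying all the spheres together with $b_0-1$ isolated vertices.
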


The research on geometric random simplicial complexes goes back to work of Gilbert \cite{Gil} who 
introduced the random geometric graph which is in the background of the construction of the 
\v{C}ech, resp. Vietoris-Rips complex. A thorough treatment of this random geometric graph was given 
in the seminal book of Penrose \cite{PenroseRandomGeometricGraphs}  where subgraph counts are at the 
core of the investigations. Concentration inequalities for subgraph counts have very recently been 
obtained by Bachmann \cite{Bachmann} and Bachmann and Reitzner \cite{BachmannReitzner}, and it would 
be interesting if these concentration results can be extended to random Betti numbers.

In recent years there have been prominent activities on topological data analysis and persistent homology, where
reconstrutions of topological structure from data 
sets \cite{elz}, \cite{edelsbrunner-harer}, \cite{zomorodian-carlsson} are discussed. Roughly
stated, one viewpoint here is the following: The data, 
a point cloud in Euclidean space, stems from
sampling a suitable topological manifold. A natural question is
to detect topological features which may occur in a totally
random point cloud in Euclidean space. The richness of the 
random point cloud presented in this paper could indicate that it is rather the absence than
the presence of certain topological properties which characterizes
non-random data.

The article is structured as follows. Section~\ref{sec:simplicial-complexes}
provides information on simplicial complexes relevant to the present discussion,
and Section~\ref{sec:poiss-point-proc} achieves the same for Poisson point 
processes. The final section~\ref{sec:rand-simpl-compl} contains proofs of
slightly stronger versions of the theorems mentioned in this introduction.
We tried to present this work such that it may be accessible to both 
stochastic geometers and algebraic topologists.

\section{Simplicial complexes}
\label{sec:simplicial-complexes}
%
%

Simplicial complexes have various incarnations. In the sequel, we will consider both abstract simplicial complexes and geometric simplicial complexes, as described for example in \cite[\S 2, \S 3]{munkres}. Any geometric simplicial complex can be viewed as
an abstract simplicial complex by using its vertex scheme. 
Conversely, any abstract simplicial complex $\cK$
admits a geometric realization $\real(\cK)$. We note that any two geometric realizations of $\cK$ are homeomorphic. 
In what follows, the term ``simplicial complex'' may be used without
further adjectives; the reader should be able to specify that
from the context. The types of constructions employed here produce
abstract simplicial complexes with points of vertices being
embedded in Euclidean space.

\begin{definition}
  Two simplicial complexes  $\cK$ and $\cL$ on vertex sets $V(\cK)$ and $V(\cL)$, respectively, are {\em combinatorially equivalent\/} 
  if there exists a bijection $\varphi\colon V(\cK) \rightarrow V(\cL)$ 
  such that $F \in \cK$ if and only if $\varphi(F) \in \cL$.
\end{definition}

Every finite simplicial complex is combinatorially equivalent to a subcomplex of
a standard $N$-simplex $\Delta^N$ \cite[Corollary 2.9]{munkres}.
Examples of simplicial complexes arise naturally from metric spaces. Two such instances are provided by Vietoris-Rips and \v{C}ech complexes.

\begin{definition}[Vietoris-Rips complex]\label{def:vr}
  Let $X=(X,\mathbf{d})$ be a metric space 
  (usually a locally finite subset of $\RR^d$) and $0<\rho\in\RR$.
  The {\em Vietoris-Rips complex\/} of $(X,\mathbf{d})$ with respect to $\rho$
  \[ \vietoris(X,\mathbf{d},\rho) = \vietoris(X,\rho) \]
  is the abstract simplicial complex on vertex set $X$ whose $k$-simplices are all subsets
  $\{x_0,\dotsc,x_k\}\subset X$ with $\mathbf{d}(x_i,x_j) \leq \rho$ for all
  $0\leq i,j\leq k$.
\end{definition}

The Vietoris-Rips complex of $X$ (w.r.t. $\rho$) is determined by its $1$-skeleton
in the sense that a subset $\{x_0,\dotsc,x_k\}\subset X$ is
a $k$-simplex if and only if all its subsets $\{x_i,x_j\}$ with
two elements are $1$-simplices. In other words, the Vietoris-Rips complex equals the clique complex of the graph on vertex set $X$ whose  edges are those pairs of vertices with distance smaller than or equal to $\rho$. 

\begin{definition}[\v{C}ech complex]\label{def:cc}
  Let $X$ be a subset of a metric space $(Y,\mathbf{d})$ 
  (usually a locally finite subset of $\RR^d$) and $0<\rho\in\RR$.
  The {\em \v{C}ech complex\/} of $X\subset(Y,\mathbf{d})$ with respect to $\rho$
  \[ \cech(X\subset Y,\mathbf{d},\rho) = \cech(X,\rho) \]
  is the abstract simplicial complex whose $k$-simplices are all subsets
  $\{x_0,\dotsc,x_k\}\subset X$ admitting a point $y\in Y$ with 
  $\mathbf{d}(x_i,y) \leq \tfrac{\rho}{2}$ for all
  $0\leq i\leq k$.
\end{definition}

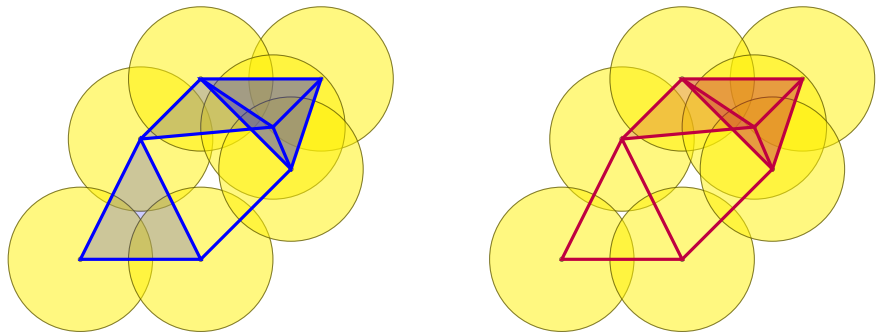
\begin{figure}
\begin{center}
  \begin{tikzpicture}[scale=0.8]
    {\draw[fill,color=yellow,fill opacity=0.5,draw opacity=0.5] (-1,2) circle (1.2cm);}
    {\draw[draw opacity=0.5] (-1,2) circle (1.2cm);}
    {\draw[draw opacity=0.5] (-1,2) circle (1pt);}

    {\draw[fill,color=yellow,fill opacity=0.5,draw opacity=0.5] (-2,0) circle (1.2cm);}
    {\draw[draw opacity=0.5] (-2,0) circle (1.2cm);}
    {\draw[draw opacity=0.5] (-2,0) circle (1pt);}

    {\draw[fill,color=yellow,fill opacity=0.5,draw opacity=0.5] (0,0) circle (1.2cm);}
    {\draw[draw opacity=0.5] (0,0) circle (1.2cm);}
    {\draw[draw opacity=0.5] (0,0) circle (1pt);}

    {\draw[fill,color=yellow,fill opacity=0.5,draw opacity=0.5] (2,3) circle (1.2cm);}
    {\draw[draw opacity=0.5] (2,3) circle (1.2cm);}
    {\draw[draw opacity=0.5] (2,3) circle (1pt);}

    {\draw[fill,color=yellow,fill opacity=0.5,draw opacity=0.5] (0,3) circle (1.2cm);}
    {\draw[draw opacity=0.5] (0,3) circle (1.2cm);}
    {\draw[draw opacity=0.5] (0,3) circle (1pt);}

    {\draw[fill,color=yellow,fill opacity=0.5,draw opacity=0.5] (1.2,2.2) circle (1.2cm);}
    {\draw[draw opacity=0.5] (1.2,2.2) circle (1.2cm);}
    {\draw[draw opacity=0.5] (1.2,2.2) circle (1pt);}

    {\draw[fill,color=yellow,fill opacity=0.5,draw opacity=0.5] (1.5,1.5) circle (1.2cm);}
    {\draw[draw opacity=0.5] (1.5,1.5) circle (1.2cm);}
    {\draw[draw opacity=0.5] (1.5,1.5) circle (1pt);}

    {\draw[very thick,color=blue] (-1,2) -- (-2,0) -- (0,0) -- cycle;}
    {\draw[very thick,color=blue] (1.5,1.5) -- (0,3) -- (2,3) -- cycle;}
    {\draw[very thick,color=blue] (0,0) -- (1.5,1.5);}
    {\draw[very thick,color=blue] (-1,2) -- (0,3);}
    {\draw[very thick,color=blue] (1.2,2.2) -- (-1,2);}
    {\draw[very thick,color=blue] (1.2,2.2) -- (2,3);}
    {\draw[very thick,color=blue] (1.2,2.2) -- (0,3);}
    {\draw[very thick,color=blue] (1.2,2.2) -- (1.5,1.5);}

    {\draw[fill,color=blue,fill opacity=0.2] (-1,2) -- (-2,0) -- (0,0) -- cycle;}
    {\draw[fill,color=blue,fill opacity=0.2] (1.5,1.5) -- (0,3) -- (2,3) -- cycle;}
    {\draw[fill,color=blue,fill opacity=0.2] (-1,2) -- (0,3) -- (1.2,2.2) -- cycle;}
    {\draw[fill,color=blue,fill opacity=0.2] (1.5,1.5) -- (0,3) -- (1.2,2.2) -- cycle;}
    {\draw[fill,color=blue,fill opacity=0.2] (2,3) -- (0,3) -- (1.2,2.2) -- cycle;}
    {\draw[fill,color=blue,fill opacity=0.2] (1.5,1.5) -- (2,3) -- (1.2,2.2) -- cycle;}

    {\draw[xshift=8cm,fill,color=yellow,fill opacity=0.5,draw opacity=0.5] (-1,2) circle (1.2cm);}
    {\draw[xshift=8cm,draw opacity=0.5] (-1,2) circle (1.2cm);}
    {\draw[xshift=8cm,draw opacity=0.5] (-1,2) circle (1pt);}

    {\draw[xshift=8cm,fill,color=yellow,fill opacity=0.5,draw opacity=0.5] (-2,0) circle (1.2cm);}
    {\draw[xshift=8cm,draw opacity=0.5] (-2,0) circle (1.2cm);}
    {\draw[xshift=8cm,draw opacity=0.5] (-2,0) circle (1pt);}

    {\draw[xshift=8cm,fill,color=yellow,fill opacity=0.5,draw opacity=0.5] (0,0) circle (1.2cm);}
    {\draw[xshift=8cm,draw opacity=0.5] (0,0) circle (1.2cm);}
    {\draw[xshift=8cm,draw opacity=0.5] (0,0) circle (1pt);}

    {\draw[xshift=8cm,fill,color=yellow,fill opacity=0.5,draw opacity=0.5] (2,3) circle (1.2cm);}
    {\draw[xshift=8cm,draw opacity=0.5] (2,3) circle (1.2cm);}
    {\draw[xshift=8cm,draw opacity=0.5] (2,3) circle (1pt);}

    {\draw[xshift=8cm,fill,color=yellow,fill opacity=0.5,draw opacity=0.5] (0,3) circle (1.2cm);}
    {\draw[xshift=8cm,draw opacity=0.5] (0,3) circle (1.2cm);}
    {\draw[xshift=8cm,draw opacity=0.5] (0,3) circle (1pt);}

    {\draw[xshift=8cm,fill,color=yellow,fill opacity=0.5,draw opacity=0.5] (1.2,2.2) circle (1.2cm);}
    {\draw[xshift=8cm,draw opacity=0.5] (1.2,2.2) circle (1.2cm);}
    {\draw[xshift=8cm,draw opacity=0.5] (1.2,2.2) circle (1pt);}

    {\draw[xshift=8cm,fill,color=yellow,fill opacity=0.5,draw opacity=0.5] (1.5,1.5) circle (1.2cm);}
    {\draw[xshift=8cm,draw opacity=0.5] (1.5,1.5) circle (1.2cm);}
    {\draw[xshift=8cm,draw opacity=0.5] (1.5,1.5) circle (1pt);}

    {\draw[xshift=8cm,very thick,color=purple] (-1,2) -- (-2,0) -- (0,0) -- cycle;}
    {\draw[xshift=8cm,very thick,color=purple] (1.5,1.5) -- (0,3) -- (2,3) -- cycle;}
    {\draw[xshift=8cm,very thick,color=purple] (0,0) -- (1.5,1.5);}
    {\draw[xshift=8cm,very thick,color=purple] (-1,2) -- (0,3);}
    {\draw[xshift=8cm,very thick,color=purple] (1.2,2.2) -- (-1,2);}
    {\draw[xshift=8cm,very thick,color=purple] (1.2,2.2) -- (2,3);}
    {\draw[xshift=8cm,very thick,color=purple] (1.2,2.2) -- (0,3);}
    {\draw[xshift=8cm,very thick,color=purple] (1.2,2.2) -- (1.5,1.5);}
    {\draw[xshift=8cm,fill,color=purple,fill opacity=0.2] (1.5,1.5) -- (0,3) -- (2,3) -- cycle;}
    {\draw[xshift=8cm,fill,color=purple,fill opacity=0.2] (-1,2) -- (0,3) -- (1.2,2.2) -- cycle;}
    {\draw[xshift=8cm,fill,color=purple,fill opacity=0.2] (1.5,1.5) -- (0,3) -- (1.2,2.2) -- cycle;}
    {\draw[xshift=8cm,fill,color=purple,fill opacity=0.2] (2,3) -- (0,3) -- (1.2,2.2) -- cycle;}
    {\draw[xshift=8cm,fill,color=purple,fill opacity=0.2] (1.5,1.5) -- (2,3) -- (1.2,2.2) -- cycle;}
  \end{tikzpicture}
\end{center}
\caption{Vietoris-Rips and \v{C}ech complex of the same 7-element set}
\end{figure}
It directly follows from the definitions that the \v{C}ech complex is a subcomplex of the Vietoris-Rips w.r.t. the same parameter $\rho$ and that their $1$-skeleta coincide. Moreover, it is well-known that the Vietoris-Rips complexes can be squeezed between two \v{C}ech complexes in the following way: 
\begin{lemma}\label{lem:vr-1skeleton-cech}
  Let $X$ be a subset of $\RR^d$ and $0<\rho\in \RR$. Then
  \[
  \cech(X,\rho)\subseteq \vietoris(X,\rho)\subseteq\cech(X,2\rho).
  \]
  Moreover, $\cech(C,\rho)$ and $\vietoris(X,\rho)$ have the same $1$-skeleton.
\end{lemma}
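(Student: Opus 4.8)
The plan is to verify all three assertions directly from Definitions~\ref{def:vr} and~\ref{def:cc}, using nothing beyond the triangle inequality and the fact that $\RR^d$ is a midpoint space (any two points have a metric midpoint lying in $\RR^d$).

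For the inclusion $\cech(X,\rho)\subseteq\vietoris(X,\rho)$ I would take a $k$-simplex $\{x_0,\dotsc,x_k\}$ of $\cech(X,\rho)$ and a witness $y\in\RR^d$ with $\mathbf{d}(x_i,y)\le\tfrac{\rho}{2}$ for all $i$; then $\mathbf{d}(x_i,x_j)\le\mathbf{d}(x_i,y)+\mathbf{d}(y,x_j)\le\rho$ for all $i,j$, so $\{x_0,\dotsc,x_k\}$ is a simplex of $\vietoris(X,\rho)$. For $\vietoris(X,\rho)\subseteq\cech(X,2\rho)$ I would take $\{x_0,\dotsc,x_k\}\in\vietoris(X,\rho)$, so that $\mathbf{d}(x_i,x_j)\le\rho$ for all $i,j$, and simply use the vertex $y=x_0\in\RR^d$: then $\mathbf{d}(x_i,y)=\mathbf{d}(x_i,x_0)\le\rho=\tfrac{2\rho}{2}$ for all $i$, which is precisely the defining condition for $\{x_0,\dotsc,x_k\}$ to lie in $\cech(X,2\rho)$.

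For the last claim, both $\cech(X,\rho)$ and $\vietoris(X,\rho)$ have vertex set $X$, and by the first inclusion every $1$-simplex of $\cech(X,\rho)$ is a $1$-simplex of $\vietoris(X,\rho)$. Conversely, given $\{x_0,x_1\}\in\vietoris(X,\rho)$, i.e.\ $\mathbf{d}(x_0,x_1)\le\rho$, I would choose $y$ to be the midpoint of the segment $[x_0,x_1]\subset\RR^d$, so that $\mathbf{d}(x_0,y)=\mathbf{d}(x_1,y)=\tfrac12\mathbf{d}(x_0,x_1)\le\tfrac{\rho}{2}$; hence $\{x_0,x_1\}\in\cech(X,\rho)$, and the $1$-skeleta coincide.

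There is no real obstacle here: the argument is a short triangle-inequality computation. The only point worth flagging is that the reverse edge inclusion (and, implicitly, the freedom to place the witness point $y$ anywhere in the ambient space) uses that $\RR^d$ is a geodesic/midpoint space, so this part would not carry over verbatim to a general ambient metric space $Y$ in Definition~\ref{def:cc}; since our applications only ever use $Y=\RR^d$, this is harmless.
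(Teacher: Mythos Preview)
Your argument is correct and entirely standard. In fact, the paper does not supply a proof of Lemma~\ref{lem:vr-1skeleton-cech} at all: it is introduced with the phrase ``it is well-known that the Vietoris-Rips complexes can be squeezed between two \v{C}ech complexes'' and then simply stated. Your triangle-inequality proof of the two inclusions and the midpoint argument for the equality of $1$-skeleta are exactly the expected justifications, and your remark about the ambient space needing midpoints is an apt caveat.
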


Let $0<\rho\in \RR$. We say that a finite simplicial complex $\cK$  admits a  \emph{($d$-dimensional) \v{C}ech representation  with respect to\/} $\rho$ if there exists a finite subset $X\subset \RR^d$ such that
$\cK$ is combinatorially equivalent to $\cech(X,\rho)$. In this case, we call $X$ a \emph{($d$-dimensional) \v{C}ech representation} of $\cK$.  
The definition of a \emph{Vietoris-Rips representation} is analogous.

\begin{remark}
Given those definitions, one of the first questions one might ask is, which simplicial complexes $\cK$ do admit a \v{C}ech or a Vietoris-Rips representation. In the latter case, $\cK$ necessarily has to be a \emph{flag} complex, i.e., a simplicial complex whose inclusion-minimal missing faces are of cardinality $2$. But besides this obvious condition, no other results yielding an answer to the posed question exist. 
 It is  therefore natural to relax the original question and to only ask for such a classification up to homotopy equivalence. 
In fact, for \v{C}ech representations a complete characterization is known: On the one hand, if $\phi\colon \cK\to \RR^d$ is a piecewise linear embedding of a simplicial complex, then it follows from the Nerve Lemma (see e.g., \cite[Theorem 10.6]{bj}) that the \v{C}ech complex of a sufficiently dense point set in $\phi(\cK)$ with respect to a small enough distance parameter $\rho$ is homotopy equivalent to $\real(\cK)=\phi(\cK)$. On the other hand, another application of the Nerve Lemma shows that any \v{C}ech complex of a finite point set in $\RR^d$ is homotopy equivalent to a $d$-embeddable 
finite simplicial complex. Hence, \v{C}ech complexes of finite point sets in $\RR^d$ model precisely the  homotopy types of $d$-embeddable finite simplicial complexes. In particular, the homology groups and the Betti numbers of such vanish in degrees above $d-1$.

For Vietoris-Rips complexes, the situation is more complicated. On the one hand, it is shown in \cite[Theorem II]{AdamszekFrickVakiil} that every $d$-embeddable simplicial complex $\cK$ admits a $d$-dimensional  Vietoris-Rips representation up to homotopy equivalence, i.e., there exists a point set $X\subseteq \RR^d$ and a distance parameter $\rho$ such that $\vietoris(X,\rho)$ has the same homotopy type as $\cK$. On the other hand, it is easy to see that the boundary complex of a cross-polytope of dimension $n$ admits even a $2$-dimensional Vietoris-Rips presentation. Indeed, the Vietoris-Rips complex of $2n$ points that are equidistantly distributed on the unit sphere with respect to parameter $\rho=1-\epsilon$ (for $\epsilon$ small enough) is combinatorially equivalent to the boundary complex of the $n$-dimensional cross-polytope. Hence, any wedge sum of spheres of arbitrary dimensions occurs as the homotopy type of a  simplicial complex admitting a $d$-dimensional Vietoris-Rips representation. 
Considering homology and topological Betti numbers instead of homotopy equivalence, this also means that every list of positive integers can occur as the Betti numbers of a simplicial complex with a $d$-dimensional Vietoris-Rips representation, in contrast to the situation for \v{C}ech complexes. 
\end{remark}

Given one \v{C}ech representation $X\subset \RR^d$
of $\cK$ with respect to $\rho$, it is natural to
ask whether $X$ may be perturbed slightly (while staying a \v{C}ech representation). This question 
will be answered for both \v{C}ech and Vietoris-Rips complexes in Theorem~\ref{thm:all_is_cr}
after introducing two auxiliary notions.

\begin{definition}\label{def:distance}
  Let $X,Y \subset \RR^d$ be finite nonempty subsets, with cardinalities
  $\abs{X}$ and $\abs{Y}$, respectively.
  If $|X|=|Y|$, then the \emph{distance} between $X$ and $Y$ is the maximum distance of the pairs of points $(x,y)\in X\times Y$ with respect to the optimal 
  bijective map $f\colon X \rightarrow Y$. Otherwise, $X$ and $Y$ are said to be of distance $\infty$:
  \begin{align*}	
    \dist(X,Y) = \begin{cases}
      \min\limits_{\substack{f\colon X \rightarrow Y \\ f \text{ bijective}}} \max\limits_{x \in X} \norm{f(x) - x}_2, & \quad \text{if } \abs{X} = \abs{Y},\\
      \infty, & \quad \text{if } \abs{X} \neq \abs{Y}.
    \end{cases}
  \end{align*}
\end{definition}

This notion of distance for finite subsets of $\RR^d$ 
is well suited for our purposes, and may be compared
with the classical Hausdorff distance 
\[ d_\Haus(X,Y) = \max\{\sup_{x\in X}\inf_{y\in Y}\norm{x-y}_2,\sup_{y\in Y}\inf_{x\in X}\norm{x-y}_2\} \]
of subsets $X,Y\subset \RR^d$ as follows. For finite subsets $X,Y\subset \RR^d$
one always has an inequality
$d_\Haus(X,Y)\leq \dist(X,Y)$, and equality holds for subsets with
$1\leq \abs{X}=\abs{Y}\leq 2$. The triangles $X=\{(0,0),(1,0),(1,1)\}$ and
$Y=\{(-1,1),(0,1),(-1,0)\}$ in $\RR^2$ satisfy
$d_\Haus(X,Y) = \sqrt{2}<2=\dist(X,Y)$.

\begin{definition}\label{def:generic-cech-vietoris}
  Let $0<\rho\in \RR$ and let $\cK$ be a finite simplicial complex. A $d$-dimensional \v{C}ech representation $X\subset \RR^d$ of $\cK$ 
  with respect to $\rho$ is called {\em generic\/}
  if there exists a parameter $\delta > 0$ 
  such that for every $Y\subset \RR^d$ with $\dist(X,Y) < \delta$, the 
  simplicial complex $\cech(Y, \rho)$ 
  is combinatorially equivalent 
  to $\cK$. A generic $d$-dimensional Vietoris-Rips representation is
  defined analogously.
\end{definition}

\begin{remark}
  By Definition~\ref{def:generic-cech-vietoris}, every vertex of a 
  generic \v{C}ech representation $X\subset \RR^d$ may be moved in a
  small neighborhood, in particular a $\delta$-ball (for $\delta$ sufficiently small), without affecting the combinatorial properties 
  of the resulting simplicial complex. 
  This allows us to reduce the occurrence of a specific simplicial complex in our randomized model to the event that, in a sufficiently large window, every point of our Poisson point process lies in a $\delta$-ball around a vertex of a given simplicial complex and every such ball contains exactly one point of our Poisson point process.
\end{remark}

\begin{theorem}\label{thm:all_is_cr}
  Let $0<\rho\in \RR$. If a 
  finite simplicial complex $\cK$ admits a $d$-dimensional \v{C}ech representation
  with respect to $\rho$, then 
  it admits a generic $d$-dimensional \v{C}ech representation with
  respect to $\rho$. The analogous statement is true for
  Vietoris-Rips representations.
\end{theorem}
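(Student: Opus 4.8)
The plan is to obtain a generic representation from an arbitrary one by a single global contraction towards a point, whose effect on all the relevant combinatorial data is completely transparent. The starting observation is that, for a finite set $X=\{x_1,\dots,x_n\}\subset\RR^d$, the combinatorial type of $\cech(X,\rho)$ is controlled by finitely many real numbers which rescale linearly under homotheties: for $\emptyset\neq S\subseteq\{1,\dots,n\}$ put $r_S(X)=\min_{z\in\RR^d}\max_{i\in S}\norm{z-x_i}_2$, the radius of the smallest enclosing ball of $\{x_i:i\in S\}$. Then Definition~\ref{def:cc} says precisely that $S$ spans a simplex of $\cech(X,\rho)$ if and only if $r_S(X)\le\tfrac{\rho}{2}$. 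I would first record the two elementary facts used later: each map $X\mapsto r_S(X)$ is Lipschitz continuous in the positions of the points, and a homothety of ratio $c>0$ multiplies every $r_S$ by $c$.

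Next, start with a $d$-dimensional \v{C}ech representation $X\subset\RR^d$ of $\cK$ with respect to $\rho$, so that, under a fixed combinatorial equivalence, the subsets $S$ with $2r_S(X)\le\rho$ are exactly the faces of $\cK$. As there are finitely many subsets and every non-face satisfies $2r_S(X)>\rho$, the number $\varepsilon:=\min\{\,2r_S(X)-\rho : S\text{ not a face of }\cK\,\}$ is strictly positive (if $\cK$ has no missing faces the condition below is vacuous and any $\lambda\in(0,1)$ works). Fix $\lambda\in(0,1)$ with $\lambda<\varepsilon/(\rho+\varepsilon)$ and set $X':=\{(1-\lambda)x_1,\dots,(1-\lambda)x_n\}$, again an $n$-point subset of $\RR^d$. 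For a face $S$ of $\cK$ one gets $2r_S(X')=(1-\lambda)\,2r_S(X)\le(1-\lambda)\rho<\rho$, while for a non-face $S$ one gets $2r_S(X')=(1-\lambda)\,2r_S(X)\ge(1-\lambda)(\rho+\varepsilon)>\rho$. Hence $\cech(X',\rho)$ and $\cech(X,\rho)$ have the same faces, so the bijection $x_i\mapsto(1-\lambda)x_i$ is a combinatorial equivalence and $X'$ is again a \v{C}ech representation of $\cK$ with respect to $\rho$ --- but now with all the inequalities \emph{strict}.

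Genericity of $X'$ then follows by continuity. Choose $\delta>0$ so small that $\abs{2r_S(X')-\rho}>3\delta$ for all $S$; this is possible since the left-hand side is a positive number for each of finitely many subsets. If $Y=\{y_1,\dots,y_n\}\subset\RR^d$ satisfies $\dist(X',Y)<\delta$, then, after relabelling $Y$ along a bijection realizing the distance, $\norm{y_i-(1-\lambda)x_i}_2<\delta$ for all $i$; the Lipschitz property of the $r_S$ gives $\abs{2r_S(Y)-2r_S(X')}<3\delta$, so $2r_S(Y)<\rho$ for faces and $2r_S(Y)>\rho$ for non-faces of $\cK$. Thus $\cech(Y,\rho)$ is combinatorially equivalent to $\cK$, which is exactly the genericity required in Definition~\ref{def:generic-cech-vietoris}.

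For the Vietoris-Rips statement I would run the identical argument, replacing $r_S$ by the diameter $D_S(X)=\max_{i,j\in S}\norm{x_i-x_j}_2$ and $\tfrac{\rho}{2}$ by $\rho$: by Definition~\ref{def:vr} (equivalently, since the Vietoris-Rips complex is the clique complex of its $1$-skeleton) the subset $S$ spans a simplex of $\vietoris(X,\rho)$ iff $D_S(X)\le\rho$, the functions $D_S$ are again Lipschitz and scale by $(1-\lambda)$ under the contraction, and the same $\lambda$ and $\delta$ do the job. I do not expect a serious obstacle; the one point that genuinely carries the content is the realization that $X$ itself may fail to be generic only because $\rho$ can be a ``critical'' value, i.e.\ $2r_S(X)=\rho$ (resp.\ $D_S(X)=\rho$) for some face $S$, and that an arbitrarily small contraction pushes every such tight face strictly into the complex while, by finiteness and the choice of $\lambda$, keeping every non-face strictly outside. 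The remaining work --- checking that $r_S$ and $D_S$ characterize the faces, are Lipschitz, and scale under homotheties --- is routine.
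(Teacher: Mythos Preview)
Your proof is correct and follows essentially the same approach as the paper: both contract the configuration by a homothety of ratio just below $1$ so that all the face-defining inequalities become strict, and then invoke continuity (Lipschitzness of the minimal-enclosing-ball radius, resp.\ the diameter) to obtain genericity. The only cosmetic differences are that the paper first separates out the case where $X$ is already generic before scaling, and deduces the Vietoris--Rips statement from the \v{C}ech one via the common $1$-skeleton (Lemma~\ref{lem:vr-1skeleton-cech}) rather than running the parallel diameter argument you give.
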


\begin{proof}
    We show the claim for \v{C}ech complexes. Lemma~\ref{lem:vr-1skeleton-cech} implies that the claim for Vietoris-Rips complexes directly follows from the one for \v{C}ech complexes.
    
    Let $X \subset \RR^d$ be a $d$-dimensional \v{C}ech representation of $\cK$ with respect to $\rho$, i.e., $\cK$ is combinatorially equivalent to $\cech(X, \rho)$. We show that there exists a \v{C}ech representation $Y \subset \RR^d$ and $\delta > 0$ such that any points set $Z \subset \RR^d$ with $\dist(Y,Z) < \delta$ is a \v{C}ech representation of $\cK$.
    
    For a finite point set $A \subset \RR^d$ we denote by $\bbS_A$ its minimal bounding sphere, i.e., the $d$-dimensional sphere with minimal radius that contains $A$. Let further $r_A$ respectively $m_A$  denote the radius respectively the center of $\bbS_A$.
    It is straightforward to show that for $A \subseteq X$ we have $A \in \cech(X, \rho)$ if and only if $r_A \leq \rho$.
    We distinguish two cases.	
    
    \hspace*{1em} \textsf{Case 1:} $r_A< \rho$ for all $A\in \cech(X, \rho)$.
    
    Let
\begin{align*}
      \delta := \min\cbras*{|r_A-\rho|~:~A\subseteq X}.
\end{align*}
    Note that, by assumption, we have $\delta > 0$ and the minimum is attained for a face $A$ of $\cech(X,\rho)$. Let $Z \subseteq \RR^d$ be a point set with $\dist(X,Z) \leq \frac{\delta}{2}$ and let $f:X \rightarrow Z$ be a bijection such that $\dist(X,Z) = \max\limits_{x\in X}\norm{f(x)-x}_2$. We show that $\cech(X, \rho)$ is combinatorially equivalent to $\cech(Z, \rho)$ via the map $f$, 
    i.e., $A \in \cech(X, \rho)$ if and only if $f(A) \in \cech(Z, \rho)$.
    If $A \in \cech(X, \rho)$, then $r_A \leq {\rho-\delta}$. Since 
    \begin{align*}
      \norm{f(x) - m_A} \leq \norm{x - m_A} + \frac{\delta}{2} \leq r_A + \frac{\delta}{2} \leq \rho - \frac{\delta}{2} < \rho			
    \end{align*}
    for all $x \in A$, we have $r_{f(A)} \leq \rho$ and hence $f(A) \in \cech(Z, \rho)$. 
    If $A \notin \cech(X, \rho)$, then $r_A \geq \rho + \delta$. This implies
    \begin{align*}
      r_{f(A)} \geq r_A - \frac{\delta}{2} \geq \rho + \frac{\delta}{2} >\rho
    \end{align*}
    and therefore $f(A) \notin \cech(Z, \rho)$. The claim follows. In particular, we have shown that the \v{C}ech representation $X$ is already generic.
    
    \hspace*{1em} \textsf{Case 2:} There exists $A\in \cech(X,\rho)$ such that $r_A=\rho$.
    
    Let
    \begin{align*}
      \delta := \min\cbras*{r_A-\rho~ :~A \notin \cech(X, \rho)}.
    \end{align*}		 
    As $r_A > \rho$ for $A \not\in \cech(X, \rho)$, we have $\delta > 0$.
    Let 
		$$
		Y := \left\{\frac{\rho}{\rho+\tfrac{9}{10}\delta}\cdot x~:~x\in X\right\}\subseteq \RR^d$$
		be a ``scaled version'' of $X$. 
    We claim that $\cech(X, \rho)$ is combinatorially equivalent to $\cech(Y, \rho)$ via the natural bijection $f: X \rightarrow Y: x \mapsto \frac{\rho}{\rho + \tfrac{9}{10}\delta} \cdot x$. 
    Let $A \in \cech(X, \rho)$, i.e., $r_A \leq \rho$. 
    As $Y$ is obtained from $X$ by scaling it with $\frac{\rho}{\rho+\tfrac{9}{10}\delta}<1$, we conclude
    \begin{align*}
      r_{f(A)} = r_A \cdot \frac{\rho}{\rho+\tfrac{9}{10}\delta} < r_A\leq \rho		
    \end{align*}
    and hence $f(A) \in \cech(Y, \rho)$. 
    If $A \notin \cech(X, \rho)$, then $r_A \geq \rho+\delta$ and thus
    \begin{align*}
      r_{f(A)} = \frac{\rho}{\rho+\tfrac{9}{10}\delta} \cdot r_A \geq \frac{\rho}{\rho+\tfrac{9}{10}\delta} \cdot (\rho+\delta) > \rho,
    \end{align*}
    which implies $f(A) \notin \cech(Y, \rho)$. 
    It now follows from Case 1 that $Y$ is a generic \v{C}ech representation of $\cK$.
  \end{proof}

 %

%
\section{Poisson Point Processes}
\label{sec:poiss-point-proc}

Let $(\XX, \sX)$ be a measurable space and $(\Omega, \sF, \dsP)$ a fixed underlying probability space.

\begin{definition}
	We denote by $N_\sigma := N_\sigma(\XX)$ the space of all $\sigma$-finite measures $\chi$ on $\XX$, with $\chi(B) \in \NN_0 \cup \{ \infty \}$ for all $B \in \sX$, and by $\sN_\sigma := \sN_\sigma(\XX)$ the smallest $\sigma$-algebra on the set $N_\sigma$, such that the mappings $\chi \rightarrow \chi(B)$ are measurable for all $B \in \sX$.
\end{definition}

\begin{definition}
A point process is a measurable mapping from our underling probability space $(\Omega, \sF, \dsP)$ to the space of all counting measures $(N_\sigma, \sN_\sigma)$, i.e. for all $B \in \sX$ and all $k \in \NN_0$ it holds that
	\begin{align*}
		\left\{ \omega \in \Omega : \eta(\omega)(B) = k \right\} \in \sF.
	\end{align*}
\end{definition}

To shorten our notation we write $\eta(B)$ for the random variable $\eta(\omega)(B)$.
Thus a  point process $\eta$ is a discrete measure having mass concentrated at random points in the underlying space $\XX$.
	To simplify our Notation we will often handle $\eta$ as a random set of points given by
	\begin{align*}
		x \in \eta \Leftrightarrow x \in \cbras*{y \in \XX : \eta(\cbras{y}) > 0}.
	\end{align*}

	The intensity measure $\mu$ of an point process $\eta$ on the space $(\XX, \sX)$ is defined as the expected number of points of $\eta$ laying in the set $B$. Hence 
	\begin{align*}
		\mu(B) := \E{\eta(B)}, \quad B \in \sX.
	\end{align*}

\begin{definition}
	Consider a $\sigma$-finite measure $\mu$ on $\XX$. Then a Poisson point process $\eta$ with intensity measure $\mu$ on $\XX$ satisfies the following properties:
	\begin{enumerate}
		\item For all $B \in \sX$ and all $k \in \NN_0$ it holds, that $\eta(B) \overset{d}{\sim} \text{Po}_{\mu(B)}$, i.e.,
			\begin{align*}
				\Prob*{\eta(B) = k} = \frac{\mu(B)^k}{k!}e^{-\mu(B)},
			\end{align*}
			and for $\mu(B) = \infty$, we set $\frac{\infty^k}{k!}e^{-\infty} = 0$ for all $k$.
		\item For all $m \in \NN$ and all pairwise disjoint measurable sets $B_1, \ldots, B_m \in \sX$, the random variables $\eta(B_1), \ldots, \eta(B_m)$ are independent.
	\end{enumerate}
\end{definition}

In the case $\XX = \RR^d$ in which we are interested in this paper, a point process is called stationary if its intensity measure $\mu$ is invariant under translations. This implies that $\mu = t \cdot \Lambda_d$, where $t>0$ and $\Lambda_d$ denotes the $d$-dimensional Lebesgue measure.

	\begin{remark}[Cube construction]
\label{rem:cube-construction}	
We consider the space $(\RR^d, \sB^d)$, where $\sB^d$ denotes the Borel-$\sigma$-algebra on $\RR^d$ and divide the space $\RR^d$ into countable many cubes along the lattice $\ZZ^d$, i.e.
	\begin{align*}
		\RR^d = \bigcup\limits_{v \in \ZZ^d} \rbras*{v + [0,1]^d}
	\end{align*}
	where we used $v + [0,1]^d$ as short notation for the set $\cbras{v + x \in \RR^d : x \in [0,1]^d}$.
	 This allows us to construct a stationary Poisson point process $\eta$ on $\RR^d$ with intensity measure $\mu := t \Lambda_d$ where $t > 0$ and $\Lambda_d$ denotes the $d$-dimensional Lebesgue measure. 
	 We are taking a sequence $(P_v)_{v \in \ZZ^d}$ of independent Poisson distributed random variables with mean value $\mu([0,1]) = t$ and a double indexed sequence $(X_{(v,j)})_{(v,j) \in \ZZ^d \times \NN}$ of independent uniformly distributed random variables on the unit cube $[0,1]^d$. Note that we assume independence of all random variables used, especially we assume that $(X_{(v,j)})$ and $(P_v)_{v \in \ZZ^d}$ are independent.
	Now for every $v \in \ZZ^d$ we use the Poisson random variable to determine the number of points we load into the Cube $v + [0,1]^d$ and we use the uniform random variables $X_{(v,j)}$, $j \in \NN$ to determine the position of the points to add in the Cube.
	The Poisson point process $\eta$ on $\RR^d$ is given by
	\begin{align*}
		\eta := \sum\limits_{v \in \ZZ^d} \sum\limits_{j = 1}^{P_v} \delta_{v + X_{(v,j)}},
	\end{align*}
	where $\delta_{w}$ denotes the Dirac-measure with mass concentrated in the point $w \in \RR^d$.
	The above defined measure $\eta$ is a Poisson point process on $\RR^d$ with intensity measure $\mu = t \Lambda_d$.
\end{remark}
%
%
\section{Random Simplicial Complexes}
\label{sec:rand-simpl-compl}
In this section we apply the Vietoris-Rips and the \v{C}ech complex construction
to the random set of points given by a Poisson point process $\eta$.
%
\begin{definition}
Let $\eta$ be a stationary Poisson point process on $\RR^d$. We define the Poissonized versions of the Vietoris-Rips complex and \v{C}ech complex by replacing the fixed point set in the Definitions \ref{def:vr} and \ref{def:cc} by the random point set that is given by $\eta$, i.e.
	\begin{align*}
		\vietoris(\eta, \rho) = \vietoris\rbras*{\cbras*{x \in \RR^d : \eta(\cbras{x}) > 0}, \rho},
	\end{align*}
	and
	\begin{align*}
		\cech(\eta, \rho) = \cech\rbras*{\cbras*{x \in \RR^d : \eta(\cbras{x}) > 0}, \rho}.
	\end{align*}
\end{definition}

In a first step one should be interested in the question whether this construction yields isolated bounded components or one or several connected unbounded components. This was answered by Meester and Roy \cite{meester-roy-art, meester-roy-book}.

\begin{theorem}[Percolation]\label{th:percolation}
For each $\rho>0$ there exists $t_{\text{perc}}(\rho)>0$, 
such that for every stationary Poisson point process $\eta$ on $\RR^d$ 
with intensity $t < t_{\text{perc}}(\rho)$ the random \v{C}ech complex $\cech(\eta, \rho)$ 
  contains no unbounded connected simplicial complex with probability one. If the intensity satisfies $t > t_{\text{perc}}(\rho)$ then  with probability one the random \v{C}ech complex $\cech(\eta, \rho)$ 
  contains precisely one unbounded connected simplicial complex $\beast$,
a unique giant component in $\cech(\eta, \rho)$.
\end{theorem}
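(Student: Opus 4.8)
The plan is to reduce the statement to classical continuum percolation for the Gilbert graph and then invoke the results of Meester and Roy. Recall first that a simplicial complex is connected exactly when its $1$-skeleton is a connected graph, and that its connected components correspond bijectively to those of the $1$-skeleton. By Lemma~\ref{lem:vr-1skeleton-cech}, the $1$-skeleton of $\cech(\eta,\rho)$ coincides with that of $\vietoris(\eta,\rho)$: it is the graph $\CR(\eta,\rho)$ on vertex set $\eta$ having an edge between $x$ and $y$ precisely when $\norm{x-y}_2\le\rho$ (equivalently, the Boolean model whose grains are the closed balls of radius $\rho/2$ around the points of $\eta$). Hence it suffices to prove a nontrivial percolation threshold for $\CR(\eta,\rho)$ together with uniqueness of the unbounded cluster above it. Moreover, the homothety $x\mapsto\rho^{-1}x$ carries a stationary Poisson process of intensity $t$ with connection radius $\rho$ to one of intensity $t\rho^d$ with connection radius $1$; so it is enough to produce a critical value $\lambda_c=\lambda_c(d)$ for the unit-radius model, after which one sets $t_{\text{perc}}(\rho):=\lambda_c(d)\,\rho^{-d}$. (In dimension $d=1$ one has $\lambda_c=\infty$ and the second assertion is vacuous; assume $d\ge2$ from now on.)

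I would then establish $0<\lambda_c<\infty$. For positivity, pass to the Palm version of $\eta$ (add a point at the origin) and explore the cluster of the origin: when one explores from an already-discovered vertex $x$, its as-yet-unrevealed neighbours form, conditionally, a Poisson process of intensity $\lambda$ on a subset of $B(x,1)$, hence a set of cardinality dominated by a Poisson variable of mean $\lambda\cdot\mathrm{vol}(B(0,1))$. Coupling these dominating variables to be independent, the cluster of the origin is stochastically dominated by the total progeny of a Galton--Watson process with Poisson$(\lambda\cdot\mathrm{vol}(B(0,1)))$ offspring, which is almost surely finite once $\lambda\cdot\mathrm{vol}(B(0,1))\le1$. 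By the Mecke equation, the expected number of points of $\eta$ lying in a fixed bounded Borel set $A$ and belonging to an unbounded cluster equals $\lambda\,\Lambda_d(A)\,\theta(\lambda)$, where $\theta(\lambda)$ is the Palm probability that the origin lies in an unbounded cluster; since $\theta(\lambda)=0$ here, almost surely no point of $\eta$ lies in an unbounded cluster, so $\CR(\eta,\rho)$ has no unbounded component and $\lambda_c\ge\mathrm{vol}(B(0,1))^{-1}>0$. For finiteness, tile $\RR^d$ by cubes of side $s$ small enough that any two points lying in two $\ZZ^d$-adjacent cubes are within distance $1$, and call a cube \emph{open} if it contains a point of $\eta$. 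These events are independent across cubes with $\Prob*{\text{open}}=1-e^{-\lambda s^d}\to1$ as $\lambda\to\infty$, and an infinite cluster of open cubes yields an unbounded component of $\CR(\eta,\rho)$; since site percolation on $\ZZ^d$ has critical probability strictly below $1$, taking $\lambda$ large produces an unbounded component with positive probability, so $\lambda_c<\infty$.

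Set $\lambda_c:=\sup\{\lambda:\theta(\lambda)=0\}$; it is nondecreasing in $\lambda$ because a higher-intensity Poisson process is a superposition of the lower-intensity one with an independent Poisson process and adjoining points only merges clusters, and by the previous step $\lambda_c\in(0,\infty)$. If $t\rho^d<\lambda_c$ then $\theta(t\rho^d)=0$, and the Mecke argument above shows that almost surely $\cech(\eta,\rho)$ has no unbounded connected component. If $t\rho^d>\lambda_c$ then $\theta(t\rho^d)>0$, so the same computation gives positive probability that a fixed bounded window contains a point in an unbounded cluster; thus the event ``an unbounded component exists'' has positive probability. That event is invariant under all translations of $\RR^d$, and $\eta$ is ergodic (in fact mixing) under the translation group, so the event has probability one.

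Finally I would prove uniqueness when $t\rho^d>\lambda_c$. The number $K$ of unbounded components is translation invariant, hence almost surely a constant $k\in\{0,1,2,\dots\}\cup\{\infty\}$, and the previous step gives $k\ge1$. If $2\le k<\infty$, then with positive probability some ball $B(0,R)$ meets all $k$ unbounded components; conditioning on the configuration outside $B(0,R)$, there remains positive probability that the points inside connect these into a single component, which would make $K<k$ with positive probability and contradict the almost-sure constancy of $K$. The case $k=\infty$ is ruled out by the Burton--Keane argument: otherwise ``encounter points'' --- points admitting a bounded neighbourhood whose reconfiguration splits an unbounded component into at least three unbounded pieces --- would occur with positive spatial density, while a combinatorial count bounds their number in a box of side $n$ by $O(n^{d-1})$, a contradiction. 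Hence $k=1$ and there is almost surely exactly one unbounded component $\beast$. The delicate part is this last paragraph: the continuum Burton--Keane counting, and the insertion-tolerance (``finite energy'') inputs it requires, together with the two-sided control on $\lambda_c$, are precisely what Meester and Roy carry out \cite{meester-roy-art, meester-roy-book}, whose work I would cite for the details not reproduced here.
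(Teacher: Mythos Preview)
Your sketch is correct, but note that the paper does not actually prove this theorem at all: it is stated as a known result, attributed in the text immediately preceding it (and in the introduction) to Meester and Roy \cite{meester-roy-art, meester-roy-book}, with no argument given. So there is no ``paper's own proof'' to compare against.

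What you have written is a faithful outline of the classical continuum-percolation proof: the reduction to the $1$-skeleton (the Gilbert graph / Boolean model), the scaling that normalizes $\rho$, subcriticality via domination by a Galton--Watson process, supercriticality via block comparison with Bernoulli site percolation, the ergodic $0$--$1$ law for existence, and Burton--Keane for uniqueness. All of these steps are sound as sketched, and you correctly flag that the insertion-tolerance and encounter-point counting in the continuum setting are the parts that genuinely require the work in \cite{meester-roy-art, meester-roy-book}. In effect you have supplied considerably more detail than the paper itself, which simply quotes the result.
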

%
%
%

To simplify notation, we use $\cS_{\vietoris,d}$ to denote the class of all simplicial complexes that admit a $d$-dimensional Vietoris-Rips representation. Similarly, $\cS_{\cech,d}$ denotes the class of all simplicial complexes having a $d$-dimensional \v{C}ech representation. As it will usually be clear from the context, which dimension $d$ we are considering, we will mostly omit $d$ from the notation and just write $\cS_{\vietoris}$ and $\cS_{\cech}$ in the following.

\begin{theorem}\label{thm:iso_vr}
	For every simplicial complex $K \in \cS_{\vietoris,d}$ with vertex set $V$ and parameter $\rho > 0$ and every stationary Poisson point process $\eta$ on $\RR^d$ with intensity $t > 0$ the infinite Vietoris-Rips complex $\vietoris(\eta, \rho)$ contains almost surely infinitely many isolated simplicial complexes $K_n$, $n \in \NN$ that are combinatorially equivalent to $K$.
\end{theorem}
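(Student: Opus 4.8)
The plan is to combine the genericity result (Theorem~\ref{thm:all_is_cr}) with a Borel--Cantelli argument driven by the cube construction of the Poisson point process (Remark~\ref{rem:cube-construction}). First I would fix a generic $d$-dimensional Vietoris-Rips representation $X = \{x_1,\dots,x_m\} \subset \RR^d$ of $K$ with respect to $\rho$, together with a parameter $\delta > 0$ as in Definition~\ref{def:generic-cech-vietoris}, shrinking $\delta$ if necessary so that the open $\delta$-balls $B(x_i,\delta)$ are pairwise disjoint. Enlarging the radius slightly one also fixes a bounded ``window'' $W \supset \bigcup_i \overline{B(x_i,\delta)}$, say a large cube, chosen so that $W$ contains every point of $\RR^d$ lying within distance $\rho$ of $\bigcup_i \overline{B(x_i,\delta)}$ (here $\rho$, not $2\rho$, suffices because the Vietoris-Rips complex is determined by its $1$-skeleton, hence by pairwise distances). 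Then, whenever the configuration of $\eta$ inside $W$ consists of exactly one point in each $B(x_i,\delta)$ and no other point of $W$, genericity guarantees that the Vietoris-Rips complex on those $m$ points is combinatorially equivalent to $K$, and the ``no other point in $W$'' condition guarantees that this subcomplex has no edge to the rest of $\vietoris(\eta,\rho)$, i.e.\ it is an isolated component.

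Next I would place countably many disjoint translates $W + z_k$ of the window, $k \in \NN$, along a sufficiently coarse sublattice of $\ZZ^d$ (coarse enough that the translates are pairwise disjoint). Let $E_k$ be the event that inside $W + z_k$ the process $\eta$ has exactly one point in each translated ball $B(x_i,\delta) + z_k$ and no further point. By stationarity all $E_k$ have the same probability
\[
p := \Prob*{E_k} = e^{-t\Lambda_d(W)} \prod_{i=1}^m \bigl(t\,\Lambda_d(B(x_i,\delta))\bigr) > 0,
\]
using the Poisson distribution of $\eta(W\setminus \bigcup_i B(x_i,\delta))$ and of each $\eta(B(x_i,\delta))$ together with independence over disjoint sets. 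Since the windows $W + z_k$ are pairwise disjoint, the events $E_k$ are independent, so $\sum_k \Prob*{E_k} = \infty$ and the second Borel--Cantelli lemma yields that almost surely infinitely many $E_k$ occur. On each such $E_k$ we obtain an isolated subcomplex of $\vietoris(\eta,\rho)$ combinatorially equivalent to $K$, and these are genuinely distinct since they live in disjoint windows; hence almost surely there are infinitely many isolated copies $K_n$ of $K$.

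The main point requiring care — and the step I would expect to be the real obstacle — is verifying the isolation, i.e.\ that on $E_k$ the $m$-point subcomplex really has no edge connecting it to any other vertex of $\vietoris(\eta,\rho)$. This needs the window $W$ to be large enough to absorb all potential neighbours: any point $y \in \eta$ with $\mathbf{d}(y, x_i + z_k) \le \rho$ for some $i$ must lie in $W + z_k$ and hence, on $E_k$, be one of the $m$ designated points. This is a purely geometric choice of $W$ (take $W$ to be a cube containing the closed $\rho$-neighbourhood of $\bigcup_i \overline{B(x_i,\delta)}$), and once it is made, the edges of $\vietoris(\eta,\rho)$ incident to the $m$ points are exactly the edges among those $m$ points, so the component is isolated and, by genericity, combinatorially $K$. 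Everything else — disjointness of the sublattice translates, positivity of $p$, independence, Borel--Cantelli — is routine. Finally, the statement as phrased fixes $\rho$ with $K$; the same argument with $\rho$ replaced by $2\rho$ is not needed here, but I note that Lemma~\ref{lem:vr-1skeleton-cech} shows the analogous \v{C}ech statement would follow by the identical scheme, since only the $1$-skeleton (which \v{C}ech and Vietoris-Rips share) enters the isolation argument.
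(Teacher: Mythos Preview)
Your proposal is correct and follows essentially the same route as the paper: fix a generic Vietoris--Rips representation via Theorem~\ref{thm:all_is_cr}, choose $\delta$ small enough that the $\delta$-balls around the vertices are disjoint, enclose everything in a window large enough that emptiness of the window outside the balls forces isolation, translate the window along a coarse lattice so the copies are disjoint, and apply the second Borel--Cantelli lemma to the resulting independent events of positive (and translation-invariant) probability. The paper splits your single event $E_k$ into two pieces --- an event $A_\theta$ (one Poisson point in each $\delta$-ball, none elsewhere in an inner box $W_I$) and an event $B_\theta$ (no Poisson point in the surrounding annulus $W_O\setminus W_I$) --- and computes their probabilities separately before multiplying, but the product $c_A c_B$ is exactly your $p=e^{-t\Lambda_d(W)}\prod_i t\Lambda_d(B(x_i,\delta))$, and the geometric and probabilistic content is identical.
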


\begin{theorem}\label{thm:perc_vr}	For every simplicial complex $K \in \cS_{\vietoris,d}$  with vertex set $V$ and parameter $\rho > 0$ and every stationary Poisson point process $\eta$ on $\RR^d$ with intensity $t > t_{\text{perc}}(\rho)$ the infinite Vietoris-Rips complex $\vietoris(\eta, \rho)$ contains a giant unbounded connected simplicial complex $\cB_\infty$ which contains almost surely infinitely many sub complexes $K_n$, $n \in \NN$ that are combinatorially equivalent to $K$ and only connected to $\cB_\infty$ by an edge.
\end{theorem}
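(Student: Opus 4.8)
The proof should reduce Theorem~\ref{thm:perc_vr} to a local combinatorial event that a Borel--Cantelli argument forces to occur infinitely often, together with a coupling argument that guarantees an edge to the giant component. Concretely, fix $K\in\cS_{\vietoris,d}$ with vertex set $V=\{v_1,\dots,v_m\}$; by Theorem~\ref{thm:all_is_cr} we may choose a \emph{generic} $d$-dimensional Vietoris-Rips representation $X=\{x_1,\dots,x_m\}\subset\RR^d$ of $K$ with respect to $\rho$, so there is $\delta>0$ such that every $Y\subset\RR^d$ with $\dist(X,Y)<\delta$ satisfies $\vietoris(Y,\rho)\cong K$. Shrinking $\delta$ if necessary, we may assume the $\delta$-balls $B(x_i,\delta)$ are pairwise disjoint. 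The strategy then is: (i) identify a ``good window'' event -- a large box $Q$ in which $\eta$ places exactly one point in each translated ball $x_i+c+B(0,\delta)$ and no other point, and in which a surrounding ``moat'' region is empty except for one carefully placed bridge point; (ii) show this good event has positive probability depending only on the geometry of $X$, $\delta$, and the intensity $t$, and is \emph{independent} across a disjoint family of such boxes tiling $\RR^d$ (using the cube construction of Remark~\ref{rem:cube-construction} and the independence property of the Poisson point process on disjoint sets); (iii) apply Borel--Cantelli to get infinitely many such windows almost surely, and in each one extract the desired subcomplex.

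\textbf{Carrying it out.} Inside a good window, the points of $\eta$ lying in $\bigcup_i (x_i+c+B(0,\delta))$ form a set $Y$ with $\dist(X,Y)<\delta$, hence $\vietoris(Y,\rho)\cong K$. Because the $\delta$-ball positioning is preserved and no \emph{other} points of $\eta$ lie within distance $\rho$ of $Y$ except along the designated bridge, the induced subcomplex on $Y$ in $\vietoris(\eta,\rho)$ is exactly $\vietoris(Y,\rho)$, and $Y$ is connected to the rest of the complex only through a single edge. To make that edge land on the giant component $\beast$: since $t>t_{\text{perc}}(\rho)$, the giant component is unique and almost surely passes within bounded distance of every region of $\RR^d$ (it is unbounded and, by the Meester--Roy theory, has a well-defined density; more simply, the complement of $\beast$ has only bounded components almost surely, so any sufficiently large annulus around the window meets $\beast$). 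One then conditions on a realization of $\beast$ near the window and demands, as part of the good event, one extra point of $\eta$ placed so that it is within distance $\rho$ of both a designated vertex of $Y$ and a vertex of $\beast$, while the moat separating $Y$ from everything else remains empty. This still has positive conditional probability bounded below uniformly over windows far enough out, so Borel--Cantelli (second, independent form across a sparse subsequence of windows) again applies.

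\textbf{The main obstacle.} The delicate point is step (iii) as it interacts with $\beast$: the events ``window $Q_k$ is good \emph{and} its bridge point attaches to the giant component'' are not obviously independent, because whether $\beast$ passes near $Q_k$ depends on the configuration outside $Q_k$, which correlates across different $k$. The clean way around this is to first establish, using stationarity and ergodicity of the Poisson point process (or directly the uniqueness and unboundedness of $\beast$ from Theorem~\ref{th:percolation}), that almost surely \emph{every} window is within distance, say, $R$ of $\beast$ for some random but a.s.\ finite $R$ -- or better, to fix a deterministic scale and use that the probability a given large box meets $\beast$ tends to $1$. Then one runs Borel--Cantelli only on the ``interior'' event (exactly one $\eta$-point per $\delta$-ball, empty moat, one bridge point landing in a prescribed small ball $B$ adjacent to the window), which \emph{is} independent across disjoint windows by the Poisson property; and separately argues that for infinitely many of these windows the prescribed ball $B$ happens to contain a point of $\beast$ or be adjacent to one, again almost surely, by the density/uniqueness of $\beast$. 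Combining ``infinitely many interior-good windows'' with ``all but finitely many windows see $\beast$ nearby'' yields infinitely many windows that are both, completing the proof. A secondary routine check is that attaching a whisker edge does not change the homotopy type, which is exactly the observation recorded in the introduction, so the combinatorial-equivalence statement here is in fact slightly stronger than the homotopy statement advertised for the \v{C}ech version.
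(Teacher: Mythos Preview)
Your local construction and your identification of the obstacle are both correct, but the proposed resolution of that obstacle has a genuine gap. The claim that ``all but finitely many windows see $\beast$ nearby'' (equivalently, that every point of $\RR^d$ lies within some a.s.\ finite distance $R$ of $\beast$) is false: at any fixed window scale the stationary Poisson process contains infinitely many windows with no $\eta$-points whatsoever (Borel--Cantelli applied to the independent events $\{\eta(Q_k)=0\}$), and $\beast$ certainly cannot enter those. Your intersection argument---infinitely many interior-good windows meeting a cofinite set of $\beast$-visited windows---therefore collapses. The weaker fallback, that among the interior-good windows infinitely many are also $\beast$-visited, does not follow from ``density/uniqueness of $\beast$'' either: whether $\beast$ visits a given window depends on the global configuration of $\eta$, and these events are neither independent of one another nor decoupled by conditioning on the local interior events, so the independent form of Borel--Cantelli cannot be invoked on them and a bare combination argument does not close the gap.

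The paper confronts the dependence directly rather than trying to sidestep it. With $T_\theta$ the local event (a copy of $K$ inside an inner hull, attached through a linkage chain to a dense lattice shell whose $\rho$-balls cover a surrounding hitbox $W_H$) and $H_\theta$ the event that $\beast$ has a vertex in the annulus $W_H\setminus W_O$, it sets $E_\theta=T_\theta\cap H_\theta$ and applies a \emph{modified} Borel--Cantelli lemma in which pairwise independence is replaced by a covariance condition. Establishing the decay of $\Prob{E_{\theta_i}\cap E_{\theta_j}}-\Prob{E_{\theta_i}}\Prob{E_{\theta_j}}$ along a suitably sparse sequence of translations is the substantive step: one bisects $\RR^d$ by the hyperplane midway between $\theta_i$ and $\theta_j$, invokes Meester--Roy uniqueness to obtain a single unbounded cluster $C^\pm$ in each open half-space, and shows that the event ``$\theta_i+W_H$ reaches $\beast$ only via the far half-space'' has probability tending to zero as the separation grows, since its persistence would manufacture a second unbounded cluster in the near half-space. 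This half-space decoupling argument---not any density statement about $\beast$---is the ingredient your proposal is missing.
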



By replacing the $\vietoris$ operator with the $\cech$ operator, both theorems can be formulated for the class of \v{C}ech complexes too.
	
\begin{theorem}\label{thm:iso_c}
	For every simplicial complex $K \in \cS_{\cech,d}$ with vertex set $V$ and parameter $\rho > 0$ and every stationary Poisson point process $\eta$ on $\RR^d$ with intensity $t > 0$ the infinite \v{C}ech complex $\cech(\eta, \delta)$ contains almost surely infinitely many isolated simplicial complexes $K_n$, $n \in \NN$ that are combinatorially equivalent to $K$.
\end{theorem}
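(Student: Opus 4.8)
The plan is to derive Theorem~\ref{thm:iso_c} from Theorem~\ref{thm:all_is_cr} together with a Borel--Cantelli argument over a widely spaced sublattice of $\ZZ^d$.

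\textbf{Robust local pattern.} Since $K\in\cS_{\cech,d}$, Theorem~\ref{thm:all_is_cr} supplies a \emph{generic} $d$-dimensional \v{C}ech representation $X=\{x_1,\dots,x_m\}\subset\RR^d$ of $K$ with respect to $\rho$, i.e.\ a $\delta_0>0$ such that $\cech(Y,\rho)$ is combinatorially equivalent to $K$ whenever $\dist(X,Y)<\delta_0$. I would replace $\delta_0$ by $\delta:=\min\bigl\{\delta_0,\tfrac{1}{3}\min_{i\ne j}\norm{x_i-x_j}_2\bigr\}>0$; this keeps the genericity property and, in addition, makes the open balls $B(x_i,\delta)$ pairwise disjoint. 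Because Euclidean distances, and hence the \v{C}ech construction, are translation invariant, $v+X$ is a generic \v{C}ech representation of $K$ with the same $\delta$ for every $v\in\RR^d$.

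\textbf{Isolation via a buffer.} Put $A:=\bigcup_{i=1}^m B(x_i,\delta)$ and $C:=\bigl(\bigcup_{i=1}^m B(x_i,\delta+\rho)\bigr)\setminus A$, and for $v\in\RR^d$ let $E_v$ be the event that $\eta(B(v+x_i,\delta))=1$ for each $i=1,\dots,m$ and $\eta(v+C)=0$. On $E_v$ there is a unique point $y_i\in\eta$ in each $B(v+x_i,\delta)$; writing $Y_v:=\{y_1,\dots,y_m\}$, the obvious bijection gives $\dist(v+X,Y_v)<\delta$, so $\cech(Y_v,\rho)$ — which equals the full subcomplex of $\cech(\eta,\rho)$ on $Y_v$, since the \v{C}ech condition on a subset depends only on that subset and the ambient $\RR^d$ — is combinatorially equivalent to $K$. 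Moreover $Y_v$ is isolated in $\cech(\eta,\rho)$: if some $z\in\eta$ had $\norm{z-y_i}_2\le\rho$, then $z\in B(v+x_i,\delta+\rho)\subseteq (v+A)\cup(v+C)$, hence $z\in v+A$ (as $\eta(v+C)=0$), which forces $z=y_i\in Y_v$; so no edge of $\cech(\eta,\rho)$ joins $Y_v$ to its complement. By the independence and Poisson-marginal axioms, and since the $m$ balls making up $A$ are disjoint,
\[
\dsP(E_v)=\bigl(t\,\kappa\,e^{-t\kappa}\bigr)^{m}\,e^{-t\,\Lambda_d(C)}=:p,\qquad \kappa:=\Lambda_d\bigl(B(0,\delta)\bigr),
\]
a constant in $(0,1)$ for all $v$, because $t>0$, $0<\kappa<\infty$ and $\Lambda_d(C)<\infty$.

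\textbf{Infinitely many copies.} I would fix $R>0$ with $\bigcup_{i}B(x_i,\delta+\rho)\subseteq B(0,R)$ and set $L:=3R\,\ZZ^d$. For distinct $v,w\in L$ the regions $(v+A)\cup(v+C)\subseteq B(v,R)$ and $(w+A)\cup(w+C)\subseteq B(w,R)$ are disjoint, so the events $\{E_v:v\in L\}$ are (mutually) independent, being determined by $\eta$ on pairwise disjoint bounded Borel sets. Since $\sum_{v\in L}\dsP(E_v)=\infty$, the second Borel--Cantelli lemma yields that almost surely $E_v$ holds for infinitely many $v\in L$; each such $v$ contributes an isolated subcomplex $\cech(Y_v,\rho)$ of $\cech(\eta,\rho)$ combinatorially equivalent to $K$, and as these sit in the pairwise disjoint windows $B(v,R)$ they are pairwise distinct, giving the family $(K_n)_{n\in\NN}$. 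The one point that needs genuine care is the isolation step: prescribing the number of points of $\eta$ in the small balls around the translated vertices does not by itself separate the pattern from the rest of $\cech(\eta,\rho)$, so one must additionally forbid points of $\eta$ in the annular buffer $v+C$ of width $\rho$ — otherwise a nearby point could create an edge leaving $Y_v$. Everything else is a routine combination of Theorem~\ref{thm:all_is_cr}, translation invariance of the \v{C}ech construction, and the independence/Poisson structure of $\eta$.
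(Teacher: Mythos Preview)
Your proof is correct and follows essentially the same route as the paper's: invoke Theorem~\ref{thm:all_is_cr} to obtain a generic \v{C}ech representation, require exactly one Poisson point in each $\delta$-ball around the (translated) vertices and no points in a surrounding buffer of width $\rho$, compute that this event has uniformly positive probability, and apply the second Borel--Cantelli lemma over a widely spaced lattice of independent translates. The only cosmetic differences are that the paper uses axis-aligned boxes $W_I\subset W_O$ around the vertex set (rather than your unions of balls $A$ and $A\cup C$) and splits the event into two pieces $A_\theta$ and $B_\theta$ before recombining them, but the underlying argument is the same.
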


\begin{theorem}\label{thm:perc_c}
	For every simplicial complex $K \in \cS_{\vietoris,d}$ with vertex set $V$ and parameter $\rho > 0$ and every stationary Poisson point process $\eta$ on $\RR^d$ with intensity $t > t_{\text{perc}}(\rho)$ the infinite \v{C}ech complex $\cech(\eta, \rho)$ contains a giant unbounded connected simplicial complex $\cB_\infty$ which contains almost surely infinitely many sub complexes $K_n$, $n \in \NN$ that are combinatorially equivalent to $K$ and only connected to $\cB_\infty$ by an edge.
\end{theorem}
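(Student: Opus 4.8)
The plan is to combine the local ``insertion'' technique underlying Theorem~\ref{thm:iso_c} (the isolated case) with one percolation-theoretic input, and then to pass from ``one copy'' to ``infinitely many copies'' by ergodicity rather than by a direct Borel--Cantelli argument; the latter is not available here, because the events of interest refer to $\cB_\infty$, a global random object, and so are not independent across disjoint windows. Throughout I fix $\rho$ and $t>t_{\text{perc}}(\rho)$, invoke Theorem~\ref{th:percolation} to know that $\cech(\eta,\rho)$ has almost surely a unique unbounded component $\cB_\infty$, and invoke Theorem~\ref{thm:all_is_cr} to fix a generic $d$-dimensional \v{C}ech representation $X\subset B(0,R)$ of $K$ with robustness parameter $\delta>0$.

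First I would build a robust local gadget. Pick a vertex $v^*\in X$ that is extremal in some unit direction $u$ and set $w^*:=v^*+(\rho-3\delta)u$, so that $w^*$ lies within distance $\rho$ of $v^*$ but at distance $>\rho+3\delta$ from every other vertex of $X$; then every $\delta$-perturbation $Y$ of $X\cup\{w^*\}$ satisfies that $\cech(Y,\rho)$ is a copy of $\cech(X,\rho)\cong K$ together with a single extra vertex joined to that copy by exactly the edge through $v^*$. The configuration I would search for inside a large cube $W$ is: such a $\delta$-perturbed copy, surrounded --- away from its bridge vertex $w$ --- by an empty moat of width $>\rho$ (exactly as in the proof of Theorem~\ref{thm:iso_c}, this forces the copy to be a subcomplex whose only edge to the rest of $\cech(\eta,\rho)$ is the bridge edge through $v^*$), together with a dense ``channel'' of points running from $w$ in direction $u$ out to $\partial W$, whose \v{C}ech complex is connected, contains $w$, and absorbs every point of $\eta$ within distance $\rho$ of it. The channel clause is the genuinely new ingredient compared with the isolated case: it guarantees that the pendant copy of $K$ lies in the same component as \emph{any} point of $\eta$ that approaches a $\rho$-neighbourhood of the channel from outside $W$.

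Next I would prove that with positive probability there is at least one pendant copy. Decompose $\eta$ into its restrictions to $W$ and to $W^c$; conditionally on $\eta\cap W^c$, the restriction $\eta\cap W$ is an independent Poisson process on $W$. Since $t>t_{\text{perc}}(\rho)$, for $L$ large enough there is, with probability at least some $p_0>0$ (uniformly in translates of $W$, by stationarity), an unbounded component of $\cech(\eta\cap W^c,\rho)$ possessing a point within distance $\rho$ of the channel exit; I would obtain this either from a standard renormalization (block) argument for continuum percolation, or from the uniqueness statement of Theorem~\ref{th:percolation} applied to the exterior process. On that event, $\eta\cap W$ realizes the gadget with conditionally positive probability, and then the channel clause joins $w$ to the exterior unbounded component, whence $w$'s component in $\cech(\eta,\rho)$ is unbounded and, by uniqueness, equals $\cB_\infty$. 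Thus, writing $N=N(\eta)$ for the number of subcomplexes of $\cech(\eta,\rho)$ that are combinatorially equivalent to $K$, contained in $\cB_\infty$, and joined to the rest of $\cB_\infty$ by a single edge, we get $\dsP(N\ge 1)>0$. To finish, note that $N$ is invariant under all translations of $\RR^d$ (since $\cB_\infty$ is defined intrinsically), and the stationary Poisson process is ergodic under translations, so $N$ is almost surely a constant $c\in\NN_0\cup\{\infty\}$; a value $1\le c<\infty$ is impossible, because almost surely all $c$ such subcomplexes lie in a single ball $B(0,r)$, and then choosing $r$ with $\dsP(\text{some such subcomplex lies in }B(0,r))>1-\tfrac{1}{2(c+1)}$ and applying a union bound over $c+1$ pairwise disjoint translates of $B(0,r)$ produces more than $c$ distinct such subcomplexes with probability at least $\tfrac12$. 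Hence $N=\infty$ almost surely, which is the assertion; the Vietoris--Rips version follows verbatim via Lemma~\ref{lem:vr-1skeleton-cech}.

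I expect the main obstacle to be the percolation input in the third paragraph: ``$w$ lies in the unbounded component'' is not a local event, so it cannot be produced by prescribing $\eta$ on a bounded window alone. The two devices that get around this are (a) uniqueness of the unbounded component (Theorem~\ref{th:percolation}), which reduces membership in $\cB_\infty$ to mere unboundedness of $w$'s component, and (b) the independence / finite-energy property of the Poisson process together with a renormalization argument ensuring that the exterior configuration almost surely feeds an unbounded cluster into a $\rho$-neighbourhood of every sufficiently large window, after which the absorbing channel does the bridging. A secondary and more routine task is the geometric bookkeeping: checking that a single choice of $W$ and $u$ accommodates simultaneously the moat of width $>\rho$, the channel, and the $\delta$-robustness of the combinatorial type --- a matter of juggling constants in the spirit of the proofs of Theorems~\ref{thm:all_is_cr} and~\ref{thm:iso_c}.
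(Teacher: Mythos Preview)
Your argument is correct, and the local gadget---copy of $K$, single bridge vertex, moat, outgoing connection---is essentially the paper's construction. Where you diverge is in the passage from ``one pendant copy with positive probability'' to ``infinitely many almost surely''. The paper does \emph{not} use ergodicity; instead it places the gadget at lattice translates $\theta\in\gamma\ZZ^d$, writes $E_\theta=T_\theta\cap H_\theta$ with $T_\theta$ the local configuration event inside $\theta+W_O$ and $H_\theta$ the event that $\cB_\infty$ enters the surrounding annulus $\theta+(W_H\setminus W_O)$, and then controls the pairwise covariances $\lvert\dsP(E_{\theta_i}\cap E_{\theta_j})-\dsP(E_{\theta_i})\dsP(E_{\theta_j})\rvert$ along a sparse sequence $(\theta_i)$ by splitting $\RR^d$ into two half-spaces and invoking uniqueness of the unbounded cluster in each half-space; this feeds into a Kochen--Stone type Borel--Cantelli lemma. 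Your ergodicity-plus-zero-or-infinity argument replaces that whole correlation analysis by a one-line appeal to the triviality of the translation-invariant $\sigma$-field of the Poisson process, which is shorter and softer but non-quantitative. On the geometric side, the paper surrounds the gadget not with a one-directional channel but with a full \emph{absorbing shell} of densely placed lattice points in $W_O\setminus W_I$, whose $\rho$-balls cover the entire hitbox $W_H$; this has the advantage that the percolation input $\dsP(H_\theta)>0$ becomes trivial (if it failed at every translate then $\cB_\infty$ would be empty), whereas your channel forces $\cB_\infty$ to approach a \emph{specific} exit face and therefore needs a genuine renormalization or half-space argument to justify positive probability. Both designs work, but the full-shell trick is worth knowing because it makes the percolation step essentially free.
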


%
%
%
%

\begin{proof}[Proof of Theorem \ref{thm:iso_vr} and \ref{thm:iso_c}]
	Let $K \in \cS_{\vietoris,d}$ with vertex set $V = \cbras*{v_0, \ldots, v_n}$ and parameter $\rho > 0$, then $K = \vietoris(V, \rho)$ by definition of $\cS_{\vietoris,d}$. 
	Further let $\eta$ be a stationary Poisson point process on $\RR^d$ with intensity $t > 0$.
	We denote by $\alpha > 0$ the smallest distance of two vertices of $K$, i.e.,
	\begin{align*}
		\alpha = \min\limits_{\substack{v,v' \in V\\v \neq v'}} \norm{v - v'}_2.
	\end{align*}
	
	It follows from Theorem \ref{thm:all_is_cr} that $K$ is continuous realizable and therefore by Definition \ref{def:distance} there exists a $\delta_0 > 0$ such that for every point set $X$ with $\dist(X,V) < \delta_0$ the complex $\vietoris(X,\rho)$ is combinatorially equivalent to $K$.
	
	We set $\delta := \min\rbras*{\frac{\alpha}{2}, \delta_0}$ to ensure, that all balls with radius $\delta$ around the vertices of $V$ are pairwise disjoint.
	
	We define the surrounding box $W$ of the set $V$ and its coordinate width $\beta$ by
	\begin{align*}
		W = \prod\limits_{i = 1}^d [a_i, b_i], \qquad \beta = \max\limits_{i = 1}^d \abs*{a_i - b_i}
	\end{align*}
	where $a_i, b_i \in \RR$ are the minimum resp. the maximum value of the $i$-th entries of the position vectors of all coordinates, i.e.,
	\begin{align*}
		a_i = \min\limits_{v \in V} {v_i}, \qquad b_i = \max\limits_{v \in V} {v_i}.
	\end{align*}
	
	For $\theta \in \gamma \ZZ^d$ with $\gamma := \beta + 2(\delta + \rho)$ we construct the two translated and extended boxes
	\begin{alignat*}{2}
		\theta + W_I & = \cbras*{\theta} + W + \delta B_1^d, &&\qquad \text{inner box},\\
		\theta + W_O & = \cbras*{\theta} + W + (\delta+\rho) B_1^d, &&\qquad \text{outer box},
	\end{alignat*}
	as Minkowski addition of $W$ with the scaled $d$-dimensional open unit ball $\delta B_1^d$ and the translation direction $\theta$.
	Further we denote by $\theta+V$ the translated vertex set $\cbras*{\theta + v : v \in V}$.	
	
	Now we can investigate the randomly generated Vietoris-Rips complex $\vietoris(\eta \cap (\theta + W_I),\rho)$ and show that it is combinatorially equivalent to $K$ with a positive probability that is not depending on the translation $\theta \in \gamma \ZZ^d$.
	
	\begin{lemma}\label{lem:iso_a_theta}
		Let $A_{\theta}$ be the event, that the Vietoris-Rips complex $\vietoris(\eta \cap (\theta + W_I),\rho)$ is combinatorially equivalent to $K$, i.e.,
		\begin{align*}
				A_{\theta} := \cbras*{ \vietoris(\eta \cap (\theta + W_I)) \text{ is combinatoric equivalent to } K}.
		\end{align*}
		Then there exists a constant $c_A \in (0, 1]$ such that $\Prob*{A_{\theta}} \geq c_A$ for all $\theta \in \gamma \ZZ^d$.
		\begin{proof}
			From Definition \ref{def:distance} it follows directly that if $\dist(\eta \cap (\theta + W_I), \theta + V) < \delta$ holds, than the simplicial complexes $\vietoris(\eta \cap (\theta + W_I),\rho)$ and $K$ are combinatorially equivalent. Therefore 
			\begin{align*}
				A_{\theta} \supseteq \cbras*{ \dist(\eta \cap (\theta + W_I), \theta + V) < \delta },
			\end{align*}
			where the right hand side denotes the event, that in every open ball with radius $\delta$ around every translated vertex $v \in \theta+V$ lays exactly one point of $\eta$ and that there are no points in $\eta \cap (\theta + W_I)$ that are laying outside of these balls. Further by the choice of $\delta$ all balls are pairwise disjoint,
			\begin{align*}
				A_{\theta} \supseteq \cbras*{ \eta\rbras*{B_\delta^d(v)} = 1, \forall v \in V \text{ and } \eta \rbras*{W_I \setminus \bigcup\limits_{v \in V} B_\delta^d(v) } = 0},
			\end{align*}
			where we used the translation in-variance of $\eta$ to set $\theta = 0$. By the independence property of our Poisson point process $\eta$ it follows that
			\begin{align*}
				\Prob*{A_{\theta}} & \geq \rbras*{\prod_{v \in V } \Prob*{\eta\rbras*{B_\delta^d(v)} = 1}} \cdot \Prob*{\eta \rbras*{W_I \setminus \bigcup\limits_{v \in V } B_\delta^d(v) } = 0}\\
								& = \prod_{v \in V} \rbras*{t\kappa_d \delta^d \exp(-t\kappa_d \delta^d)} \cdot \exp\rbras*{- \rbras*{t \Lambda_d(W_I) - \abs{V } t  \kappa_d \delta^d}}\\
								& = \rbras*{t \kappa_d \delta^d}^{n+1}  \exp \rbras*{-(n+1) t \kappa_d \delta^d - t\Lambda_d(W_I) + (n+1) t \kappa_d \delta^d}\\
								& = \rbras*{t \kappa_d \delta^d}^{n+1} \exp\rbras*{- t \Lambda_d (W_I)} =: c_A > 0,
			\end{align*}
			where $\kappa_d$ denotes the volume of the $d$-dimensional unit ball.
		\end{proof}
	\end{lemma}
	
	In the next step we will define the event $B_{\theta}$ to ensure that the simplicial complex given by $\vietoris(\eta \cap (\theta + W_I),\rho)$ is isolated.
	Again we will show, that this event has a positive probability not depending on the translation $\theta \in \gamma \ZZ^d$.
	
	\begin{lemma}\label{lem:iso_b_theta}
		Let $B_{\theta}$ be the event, that the Poisson point process $\eta$ has no points in the $\rho$-neighborhood around $\theta + W_I$, i.e.,
		\begin{align*}
			B_{\theta} := \cbras{ \eta\rbras*{(\theta + W_O) \setminus (\theta + W_I)} = 0}.
		\end{align*}
		Then there exists a constant $c_B \in (0,1]$ such that $\Prob*{B_{\theta}} = c_B$ for all $\theta \in \gamma \ZZ^d$.
		\begin{proof}
			By definition we have $W_I \subset W_O$ and therefore it follows from the translation in-variance that
			\begin{align*}
				\Prob*{B_{\theta}} = \exp \rbras*{-t \rbras*{ \Lambda_d (W_O) - \Lambda_d (W_I) }} =: c_B > 0,
			\end{align*}
			for all $\theta \in \gamma\ZZ^d$.
		\end{proof}
	\end{lemma}
	
	To prove our main result, it is sufficient to show that there existing infinitely many translations $\theta \in \gamma \ZZ^d$ such that $E_{\theta} := A_{\theta} \cap B_{\theta}$ occurs. The event $A_{\theta}$ states that $\vietoris(\eta \cap (\theta + W_I),\rho)$ is combinatorially equivalent to $K$ and $B_{\theta}$ ensures that this simplicial complex can not be connected to any other point that is not already in the vertex set $\eta \cap (\theta + W_I)$. Note that for all $y \in \eta \setminus (\theta + W_I)$ and for all $x \in \eta \cap (\theta + W_I)$ we have $\norm{y - x}_2 > \rho$.
	
	Further it follows from the independence property of $\eta$ that the events $A_{\theta}$ and $B_{\theta}$ are independent and therefore by Lemma \ref{lem:iso_a_theta} and \ref{lem:iso_b_theta} we have
	\begin{align*}
		\Prob*{E_{\theta}} = \Prob*{A_{\theta}}\Prob*{B_{\theta}} \geq c_A c_B =: c \in (0, 1]
	\end{align*}
	for all $\theta \in \gamma \ZZ^d$.
	Now we are ready to use the well known Borel-Cantelli lemma:
	\begin{lemma}[Borel-Cantelli]
		Let $E_1, E_2, \ldots$ be a sequence of pairwise independent events on some probability space $(\Omega, \sF, \dsP)$ with $\sum\limits_{i = 1}^\infty \Prob*{E_i} = \infty$.
		
		Then $\Prob*{\limsup\limits_{i \rightarrow \infty} E_i} = 1$.
	\end{lemma}
	
	For $\theta_1, \theta_2 \in \gamma \ZZ^d$ with $\theta_1 \neq \theta_2$ it follows directly by the definition of $\gamma$ that $(\theta_1 + W_O) \cap (\theta_2 + W_O) = \emptyset$ and therefore the events $E_{\theta_1}$ and $E_{\theta_2}$ are independent. Further there exists a constant $c := c_A c_B \in (0,1]$ such that $\Prob*{E_{\theta}} \geq c$ for all $\theta \in \gamma \ZZ^d$. 
	
	Note that $\ZZ^d$ is countable, thus we can apply a bijection $\varphi : \NN \rightarrow \gamma \ZZ^d$ to derive that
	\begin{align*}
		\sum\limits_{\theta \in \gamma \ZZ^d} \Prob*{E_\theta} = \sum\limits_{i = 1}^\infty \Prob*{E_{\varphi(i)}} \geq \sum\limits_{i = 1}^\infty c = \infty.
	\end{align*}
	Using the Borel-Cantelli Lemma on the series $E_{\varphi(1)}, E_{\varphi(2)}, \ldots$ yields
	\begin{align*}
		\Prob*{\limsup\limits_{i \rightarrow \infty} E_{\varphi(i)}} = 1,
	\end{align*}
	and thus
	\begin{align*}
		& \Prob*{E_{\varphi(i)} \text{ for infinitly many } i \in \NN }\\ 
		= &\Prob*{E_{\theta} \text{ for infinitly many translations } \theta \in \gamma \ZZ^d} = 1,
	\end{align*}
	which completes the proof of Theorem \ref{thm:iso_vr}. The proof of \ref{thm:iso_c} is similar.
\end{proof}

The proofs of Theorem \ref{thm:iso_vr} and Theorem \ref{thm:iso_c} use the Borel-Cantelli Lemma in its original form and thus rely essentially on the strong independence property of the Poisson point process. In the following proof of Theorem \ref{thm:perc_vr} and Theorem \ref{thm:perc_c} this is no longer possible. If two complexes are both joined via the giant component of the simplicial complex, they are not independent and a careful analysis of their dependency structure is necessary.

\begin{proof}[Proof of Theorem \ref{thm:perc_vr} and \ref{thm:perc_c}]
	Let $\eta$ be a stationary Poisson point process on $\RR^d$ with intensity $t > t_{\text{perc}}(\rho)$.
	Note that in this case the lower bound for the intensity is depending on the distance parameter of the simplicial complex, to ensure that the $1$-skeleton of $\vietoris(\eta, \rho)$ percolates and thus there exists a giant unbounded connected component, denoted as $\cB_\infty$, in $\vietoris(\eta, \rho)$, see Theorem \ref{th:percolation}.
	
	For $K \in \cS_{\vietoris,d}$ with vertex set $V = \cbras*{v_0, \ldots, v_n}$ and parameter $\rho > 0$, we have $K = \vietoris(V, \rho)$ by definition of $\cS_{\vietoris,d}$.
	Denote by $\conv(V)$ the convex hull of the set $V$ and choose one vertex on the boundary of the convex hull as the connection vertex ${v_A \in \partial \conv(V) \cap V}$ to $\cB_\infty$.
	Let $\bbH$ be a $(d-1)$-dimensional supporting hyperplane such that $\bbH \cap \conv(V) \subseteq \partial \conv(V)$ and denote by $\bbH^\pm$ the corresponding open half spaces such that ${\bbH^+ \cap \conv(V) = \emptyset}$. Let $\vec{u} \in \RR^d$ be the normal vector of $\bbH$ pointing into the half space $\bbH^+$ with $\norm{\vec{u}}_2 = 1$. To shorten our notation, we set $V^* := V \setminus \cbras*{v_A}$.
	
	For $\varepsilon \in (0, \frac{\rho}{4})$ we define the linkage vertex $v_L \in \bbH^+$ by $v_L := v_A + (\rho - 2\varepsilon) \vec{u}$ and for all $x \in B_\varepsilon^d(v_A)$ and $y \in B_\varepsilon^d(v_L)$ it follows directly by the Triangle inequality, that $\norm{x - y}_2 < \rho$.
	Note that, by continuity, there exists an $\varepsilon_0 > 0$ small enough, such that for all $y \in B_{\varepsilon_0}^d(v_L)$ it holds that $B_\rho^d(y) \cap B_{\varepsilon_0}^d(v) = \emptyset$ for all $v \in V^*$.
		
	At this point, we conclude three important consequences of our construction by choosing the parameter $\delta$ in a proper way:
	
	\begin{remark}\label{rem:choose_delta}
	Let $\delta := \min(\frac{\alpha}{2}, \delta_0, \varepsilon_0)$, where $\alpha$ and $\delta_0$ are as in the proof of Theorem \ref{thm:iso_vr} before, then
		\begin{enumerate}[(i)]
			\item all balls with radius $\delta$ around all vertices of $V$ are pairwise disjoint,
			\item for all sets $X$ with $\dist(X,V) < \delta$ it follows that $\vietoris(X,\rho)$ is combinatorially equivalent to $K$, and
			\item every point $x \in B_\delta^d(v_L)$ is forced to connect to every point $y \in B_\delta^d(v_A)$, but not allowed to connect to any other point $z \in B_\delta^d(v)$ for all  $v \in V^*$.
		\end{enumerate}
	\end{remark}
	
	We continue with the geometric construction to prove our main result by defining the extended convex hulls given by
	\begin{alignat*}{2}
		U_I & = \conv(V) + \delta B_1^d, &&\qquad \text{inner hull},\\
		U_O & = \conv(V) + (\delta+\rho) B_1^d, &&\qquad \text{outer hull},
	\end{alignat*}
	and the two surrounding boxes $W_I \subset W_O$ of the outer hull $U_O$ that have their vertices on the lattice $s \ZZ^d$ with $s := \frac{\rho}{4\sqrt{d}}$ given by
	\begin{align*}
		\quad W_I = \prod\limits_{i = 1}^d [a_i, b_i], \qquad W_O = \prod\limits_{i=1}^d [-s k +  a_i, b_i + s k],
	\end{align*}
	where $k \in \NN$, $k > 4 \sqrt{d}$ and $a_i, b_i \in s \ZZ$ are given by
	\begin{align*}
		a_i = \max\limits_{y \in s\ZZ}\cbras*{y : y \leq \min\limits_{x \in U_O} \cbras{x_i}}, \quad b_i = \min\limits_{y \in s\ZZ}\cbras*{y : y \geq \max\limits_{x \in U_O} \cbras{x_i}}.
	\end{align*}
	Further we define the "hitbox" ~$W_H$ as extension of the box $W_O$ by
	\begin{align*}
		W_H = W_O + \frac{\rho}{2\sqrt{d}} B_1^d.
	\end{align*}
	
	We denote by $\gamma_i$ the coordinate width of $W_H$ in direction of the $i$-th standard basis vector $\vec{e}_i$. Clearly, by definition of $W_H$ it holds that $\gamma_i > \rho + \frac{\rho}{\sqrt{d}}$ for all $i \in \cbras*{1, \ldots, d}$ and additionally $\gamma_i$ is a integer multiple of the lattice width $s = \frac{\rho}{4 \sqrt{d}}$.
	
	No we can conclude two important properties of our lattice $s \ZZ^d$ in the following lemmas:
	
	\begin{lemma}\label{lem:lattice_independence}
		For all $x,y \in s\ZZ^d$ with $x \neq y$ the events $\cbras*{ \eta\rbras*{B^d_\frac{s}{2}(x)} = 1}$ and $\cbras*{ \eta\rbras*{B^d_\frac{s}{2}(y)} = 1}$ are independent.
		\begin{proof}
			It follows directly from $x \neq y$, that the distance between the lattice points is bounded from below by the lattice width, thus we have $\norm{x - y}_2 \geq s$ and $B^d_\frac{s}{2}(x) \cap B^d_\frac{s}{2}(y) = \emptyset$.
			The claim then follows directly from the independence property of the Poisson point process $\eta$.
		\end{proof}
	\end{lemma}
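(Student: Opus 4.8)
The only genuine input needed is the defining independence property of a Poisson point process, namely that $\eta(B_1)$ and $\eta(B_2)$ are independent whenever $B_1,B_2\in\sB^d$ are disjoint. So the plan is to reduce the statement to the purely geometric fact that the two balls $B^d_{\frac s2}(x)$ and $B^d_{\frac s2}(y)$ are disjoint when $x\ne y$ lie on the lattice $s\ZZ^d$, and then simply quote this property.

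First I would record that distinct points of $s\ZZ^d$ are well separated: if $x\ne y$, then they differ in at least one coordinate by a nonzero integer multiple of $s$, so $\norm{x-y}_2\ge s$. Since each open ball $B^d_{\frac s2}(\cdot)$ has radius $\tfrac s2$, a common point of the two balls would lie within $\tfrac s2$ of both centres, forcing $\norm{x-y}_2<s$ by the triangle inequality, a contradiction; hence $B^d_{\frac s2}(x)\cap B^d_{\frac s2}(y)=\emptyset$. (If one prefers closed balls, the two balls meet in at most a single boundary point when $\norm{x-y}_2=s$, which is harmless: the intensity measure $t\Lambda_d$ is diffuse, so $\eta$ almost surely assigns no point to any fixed singleton and the events under consideration are unchanged.)

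Having established the disjointness, I would invoke the Poisson independence property to conclude that $\eta(B^d_{\frac s2}(x))$ and $\eta(B^d_{\frac s2}(y))$ are independent, and in particular that the events $\cbras*{\eta(B^d_{\frac s2}(x))=1}$ and $\cbras*{\eta(B^d_{\frac s2}(y))=1}$ are independent. The same reasoning applied to any finite, pairwise distinct family of lattice points shows moreover that $\bigl(\eta(B^d_{\frac s2}(x))\bigr)_{x\in s\ZZ^d}$ is a mutually independent family of random variables, which is the form in which the lemma will actually be used in the later Borel--Cantelli and renormalisation arguments. I do not expect any real obstacle here: the entire content is the inequality ``lattice spacing $s$ is at least the ball diameter $2\cdot\tfrac s2$'', which is built into the choice $s=\tfrac{\rho}{4\sqrt d}$ together with the radius $\tfrac s2$, and the rest is a direct application of the Poisson axioms from Section~\ref{sec:poiss-point-proc}.
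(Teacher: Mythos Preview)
Your proposal is correct and follows essentially the same approach as the paper: establish that distinct lattice points of $s\ZZ^d$ are at distance at least $s$, conclude that the radius-$\tfrac{s}{2}$ balls are disjoint, and then invoke the independence property of the Poisson point process. The extra remarks you include (the triangle-inequality justification, the closed-ball aside, and the extension to mutual independence of the whole family) are harmless elaborations of the same argument.
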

	
	\begin{lemma}\label{lem:lattice_covering}
		Let $L^*$ be the set of all lattice points $z \in s \ZZ^d$ that are laying in the interior of the shell $W_O \setminus W_I$.
		Let $X \subset \RR^d$ be a set of points, such that $\dist(X, L^*) < \frac{s}{2} = \frac{\rho}{8\sqrt{d}}$, then the balls with radius $\rho$ around the points of $X$ are covering the inner shell $W_O \setminus W_I$ and the outer shell $W_H \setminus W_O$, i.e.
		\begin{align*}
			W_H \setminus W_I = (W_O \setminus W_I) \cup (W_H \setminus W_O) \subset \bigcup\limits_{x \in X} B^d_\rho(x)
		\end{align*}
		\begin{proof}
			First, let $y \in W_O \setminus W_I$ be a point in the inner shell, then by definition of $L^*$ there exists a lattice point $z \in L^*$ such that $y = z + v$ for some $v$ with $\norm{v}_2 \leq \sqrt{d} s = \frac{\rho}{4}$. Further, for every lattice point $z \in L^*$ there exists a unique point $x \in X$ such that $\norm{z - x}_2 < \frac{s}{2} = \frac{\rho}{8\sqrt{d}}$, and thus it follows by the Triangle inequality that $\norm{y - x}_2 \leq \frac{\rho}{4} + \frac{\rho}{8\sqrt{d}} < \rho$. Second, let $y' \in W_H \setminus W_O$ be a point in the outer shell, then by definition of $W_H$ there exists a point $y \in W_O \setminus W_I$ such that $\norm{y - y'}_2 \leq \frac{\rho}{2\sqrt{d}}$. Thus $\norm{y' - x}_2 \leq \norm{y' - y}_2 + \norm{y - x}_2 < \frac{\rho}{2\sqrt{d}} + \frac{\rho}{4} + \frac{\rho}{8\sqrt{d}} < p$. Finally the desired result follows directly from these two steps.
		\end{proof}
	\end{lemma}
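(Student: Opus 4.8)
The plan is to cover the two shells separately: first show that the $\rho$-balls centred at the points of $X$ cover the inner shell $W_O\setminus W_I$, then that they cover the outer shell $W_H\setminus W_O$. Since $W_I\subseteq W_O\subseteq W_H$, one has $W_H\setminus W_I=(W_O\setminus W_I)\cup(W_H\setminus W_O)$, so the two covering claims together give the lemma. Throughout, I would use that $\dist(X,L^*)<\tfrac s2=\tfrac{\rho}{8\sqrt d}$ forces $\abs{X}=\abs{L^*}$ and supplies a bijection $f\colon L^*\to X$ with $\norm{f(z)-z}_2<\tfrac{\rho}{8\sqrt d}$ for every $z\in L^*$; since distinct points of $L^*\subseteq s\ZZ^d$ are at distance at least $s$, the ball $B^d_{s/2}(z)$ meets $X$ only in $f(z)$.

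\textbf{Inner shell.} Fix $y\in W_O\setminus W_I$. The key geometric fact to establish is that there is a lattice point $z\in L^*$, i.e.\ a point of $s\ZZ^d$ in the interior of $W_O\setminus W_I$, with $\norm{y-z}_2\leq\sqrt d\, s=\tfrac\rho4$. This is where the hypotheses $a_i,b_i\in s\ZZ$ and $k>4\sqrt d$ are used: because the corners of $W_I$ and $W_O$ lie on the lattice and the shell has thickness $sk>4\sqrt d\, s$ in every coordinate direction, one may choose $(z)_i\in s\ZZ$ coordinatewise to be a nearest lattice value to $y_i$, shifting by at most one lattice step ``inward'' in those coordinates in which $y$ lies too close to $\partial W_O$, and in (at least) one coordinate $j$ with $y_j\notin[a_j,b_j]$ keeping $(z)_j$ on the same side of $[a_j,b_j]$; this keeps $\norm{y-z}_2\leq\sqrt d\, s$ while placing $z$ in the interior of $W_O$ and outside $W_I$, hence in $L^*$. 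Then the corresponding $x=f(z)\in X$ satisfies $\norm{z-x}_2<\tfrac{\rho}{8\sqrt d}$, so by the triangle inequality $\norm{y-x}_2<\tfrac\rho4+\tfrac{\rho}{8\sqrt d}\leq\tfrac{3\rho}{8}<\rho$ (using $\sqrt d\geq1$), i.e.\ $y\in B^d_\rho(x)$.

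\textbf{Outer shell and conclusion.} For $y'\in W_H\setminus W_O$, note that $\tfrac{\rho}{2\sqrt d}=2s$, so $W_H=W_O+2sB_1^d$ is the $2s$-neighbourhood of $W_O$; the nearest point $y$ of the closed box $W_O$ to $y'$ therefore lies on $\partial W_O$ with $\norm{y-y'}_2\leq2s=\tfrac{\rho}{2\sqrt d}$, and since the shell thickness $sk>4\sqrt d\, s$ dwarfs $2s$, the boundary $\partial W_O$ is disjoint from $W_I$, so $y\in W_O\setminus W_I$. Running the inner-shell argument on $y$ gives $x\in X$ with $\norm{y-x}_2<\tfrac\rho4+\tfrac{\rho}{8\sqrt d}$, hence $\norm{y'-x}_2\leq\norm{y'-y}_2+\norm{y-x}_2<\tfrac{\rho}{2\sqrt d}+\tfrac\rho4+\tfrac{\rho}{8\sqrt d}\leq\tfrac{7\rho}{8}<\rho$, so $y'\in B^d_\rho(x)$; together with the inner-shell case this proves the lemma. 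The one genuinely delicate point is the geometric fact in the middle paragraph --- extracting a lattice point of $L^*$ within distance $\sqrt d\, s$ of an arbitrary shell point --- but the deliberately loose bound $\sqrt d\, s$ (rather than the optimal covering radius $\tfrac{\sqrt d}{2}s$) together with $k>4\sqrt d$ makes the coordinatewise rounding argument routine; everything else is triangle inequalities and the estimates $\tfrac{\rho}{8\sqrt d}\leq\tfrac\rho8$ and $\tfrac{\rho}{2\sqrt d}\leq\tfrac\rho2$.
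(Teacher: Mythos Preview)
Your proof is correct and follows essentially the same approach as the paper's: split $W_H\setminus W_I$ into inner and outer shells, find a lattice point $z\in L^*$ within $\sqrt{d}\,s=\tfrac{\rho}{4}$ of any inner-shell point, pass to the corresponding $x\in X$ via the $\tfrac{s}{2}$-bijection, and chain triangle inequalities to get the same bounds $\tfrac{\rho}{4}+\tfrac{\rho}{8\sqrt d}$ and $\tfrac{\rho}{2\sqrt d}+\tfrac{\rho}{4}+\tfrac{\rho}{8\sqrt d}$. The only difference is that you actually sketch why such a $z\in L^*$ exists (coordinatewise rounding using $a_i,b_i\in s\ZZ$ and $k>4\sqrt d$), whereas the paper merely asserts this ``by definition of $L^*$''; your added care here is justified, since this is indeed the one non-trivial geometric step.
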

	
	For $\theta \in \gamma \ZZ^d := \gamma_1 \ZZ \times \ldots \times \gamma_d \ZZ$ we define the translated sets $(\theta + U_I)$, $(\theta + U_O)$, $(\theta + W_I)$, $(\theta + W^{(k)}_O)$, $V(\theta)$ and $(\theta + V^*)$ as Minkowski addition of $\cbras*{\theta}$ to the corresponding set. Also we define the translated connection point $\theta + v_A$ and linkage point $\theta + v_L$, but to shorten the Notation we will omit to specify $\theta$ in the following, when all sets and points are considered to be translated with the same vector $\theta \in \gamma \ZZ^d$.
	Further, by using the translation in-variance of our Poisson point process $\eta$ the following results are not depending on the choice of $\theta \in \gamma \ZZ^d$, thus w.l.o.g. we can set $\theta = 0$ to simplify the following proofs.
	
	To make use of Lemma \ref{lem:lattice_covering} we will now define the event, that the set of points in the Poisson point process, that are laying in the shell $W_O \setminus W_I$ has a distance smaller than $\frac{s}{2}$ to the set $L^*$ and show that this event has a positive probability that is not depending on the translation $\theta \in \gamma \ZZ^d$.
	
	\begin{lemma}
		Let $B_\theta$ be the be the event, that the set $\eta \cap (\theta + (W_O \setminus W_I))$ has distance smaller than $\frac{s}{2}$ to the set $\theta + L^*$, i.e.,
		\begin{align*}
			B_\theta := \cbras*{\dist\rbras*{\eta \cap (\theta + (W_O \setminus W_I)), \theta + L^*} < \frac{s}{2}}.
		\end{align*}
		Then there exists a constant $c_B \in (0,1]$ such that $\Prob*{B_\theta} \geq c_B$ for all $\theta \in \gamma \ZZ^d$.
		\begin{proof}
			Note that $L^*$ is a finite set of points and denote by $l := \abs{L^*}$ the cardinality of $L^*$. 
			Similar to the proof of Lemma \ref{lem:iso_a_theta} we can rewrite the event $B_\theta$ and by using the independence gained from Lemma \ref{lem:lattice_independence} we have
			\begin{align*}
				\Prob*{B_\theta} = \Prob*{B_0} & = \rbras*{ \prod\limits_{x \in L^*} \Prob*{\eta(B^d_\frac{s}{2}(x)) = 1}} \cdot \Prob*{\eta\rbras*{(W_O \setminus W_I) \setminus \bigcup\limits_{x \in L^*} B^d_\frac{s}{2}(x)}}\\
								& = \rbras*{t \kappa_d (\tfrac{s}{2})^d}^l \exp\rbras*{-l t \kappa_d (\tfrac{s}{2})^d - t \Lambda_d(W_O \setminus W_I) + l t \kappa_d (\tfrac{s}{2})^d}\\
								& = \rbras*{t \kappa_d (\tfrac{s}{2})^d}^l \exp\rbras*{-t \Lambda_d(W_O \setminus W_I)} =: c_B > 0.
			\end{align*}
		\end{proof}
	\end{lemma}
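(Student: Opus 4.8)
The plan is to run the same scheme as in the proof of Lemma~\ref{lem:iso_a_theta}. First I would invoke the translation invariance of the stationary Poisson point process $\eta$ to reduce to the case $\theta = 0$, so that it suffices to produce a constant $c_B \in (0,1]$ with $\Prob*{B_0} \ge c_B$. As there, instead of computing $\Prob*{B_0}$ exactly I would bound it from below by the probability of a strictly more demanding but explicitly described event $G$ whose defining conditions refer to pairwise disjoint regions, so that the independence and Poisson distribution axioms of $\eta$ turn $\Prob*{G}$ into a product that is visibly positive.

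Concretely, set $l := \abs{L^*}$ (a finite number) and let
\[
  G := \Big\{\eta(B^d_{s/2}(z)) = 1 \text{ for every } z \in L^*\Big\} \cap \Big\{\eta\Big((W_O \setminus W_I) \setminus \bigcup_{z \in L^*} B^d_{s/2}(z)\Big) = 0\Big\}.
\]
On $G$, each ball $B^d_{s/2}(z)$, $z \in L^*$, contains exactly one point of $\eta$ and the shell $W_O \setminus W_I$ contains no further points of $\eta$; hence $\abs{\eta \cap (W_O \setminus W_I)} = l$, and the map sending each such point to the centre of the ball containing it is a bijection onto $L^*$ that displaces every point by strictly less than $\tfrac{s}{2}$. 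Thus $\dist(\eta \cap (W_O \setminus W_I), L^*) < \tfrac{s}{2}$, i.e. $G \subseteq B_0$. The one point that genuinely needs checking is that the balls $B^d_{s/2}(z)$, $z \in L^*$, are pairwise disjoint and contained in $W_O \setminus W_I$. Disjointness is exactly the geometric input of Lemma~\ref{lem:lattice_independence} (distinct points of $s\ZZ^d$ lie at distance $\ge s$). Containment follows from the fact that $W_I$ and $W_O$ have all their vertices on $s\ZZ^d$ with side lengths in $s\ZZ$, and that they are separated by $sk$ in each coordinate with $k > 4\sqrt d$: then every $z \in L^*$ lies in the open box $W_O$ at lattice distance $\ge s$ from $\partial W_O$ and at distance $\ge s$ from $W_I$, so $B^d_{s/2}(z) \subseteq W_O \setminus W_I$. (Balls around lattice points outside $L^*$ are irrelevant: any point of $\eta$ lying in the shell but in no $B^d_{s/2}(z)$ with $z \in L^*$ is already forbidden by the second event defining $G$.)

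It then remains to compute $\Prob*{G}$, which is routine. The ball regions $B^d_{s/2}(z)$, $z \in L^*$, together with the residual region $(W_O \setminus W_I) \setminus \bigcup_{z \in L^*} B^d_{s/2}(z)$, form a finite collection of pairwise disjoint measurable sets, so the counts $\eta(\cdot)$ over them are independent; each $\eta(B^d_{s/2}(z))$ is Poisson with parameter $t\kappa_d(\tfrac{s}{2})^d$, and the residual region has Lebesgue measure $\Lambda_d(W_O \setminus W_I) - l\kappa_d(\tfrac{s}{2})^d$. Multiplying the corresponding probabilities, the factors $\exp(\pm l t\kappa_d(\tfrac{s}{2})^d)$ cancel and one obtains
\[
  \Prob*{B_0} \ge \Prob*{G} = \rbras*{t\kappa_d\rbras*{\tfrac{s}{2}}^d}^{\,l}\exp\rbras*{-t\,\Lambda_d(W_O \setminus W_I)} =: c_B \in (0,1],
\]
a strictly positive constant not depending on $\theta$; combined with the initial reduction this yields $\Prob*{B_\theta} = \Prob*{B_0} \ge c_B$ for all $\theta \in \gamma\ZZ^d$.

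The only genuinely substantive step is the geometric bookkeeping in the middle paragraph — verifying that the $\tfrac{s}{2}$-balls centred at the interior shell lattice points are mutually disjoint and lie inside $W_O \setminus W_I$ — and this is exactly what the lattice alignment of $W_I$, $W_O$ and the separation bound $k > 4\sqrt d$ are there to guarantee. Everything else is the same mechanical combination of independence over disjoint sets with the Poisson point-mass formula already used for Lemma~\ref{lem:iso_a_theta}.
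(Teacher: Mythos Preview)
Your proposal is correct and follows essentially the same scheme as the paper's proof: reduce to $\theta=0$ by stationarity, identify the event (or a subevent) as ``exactly one point in each ball $B^d_{s/2}(z)$, $z\in L^*$, and nothing else in the shell'', factor by independence over disjoint regions, and evaluate the Poisson point masses to obtain $c_B=(t\kappa_d(s/2)^d)^l\exp(-t\Lambda_d(W_O\setminus W_I))$. The only difference is one of presentation: the paper writes the identification as an equality $\Prob{B_0}=\Prob{G}$ (which is in fact justified once the balls are known to be disjoint and contained in the shell), whereas you argue only the inclusion $G\subseteq B_0$ and hence $\Prob{B_0}\ge\Prob{G}$; since the lemma only asks for a lower bound, this suffices and is arguably cleaner. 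Your explicit verification that the $\tfrac{s}{2}$-balls lie inside $W_O\setminus W_I$ fills in a detail the paper leaves implicit.
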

	
	As in the proof of Theorem \ref{thm:iso_vr} we can  show that the randomly generated simplicial complex $\vietoris(\eta \cap (\theta + U_I), \rho)$ is combinatorially equivalent to $K$ with a positive probability that is not depending on the translation $\theta \in \gamma \ZZ^d$.
	
	\begin{lemma}\label{thm:main:perc:proof:lem:a_theta}
		Let $A_{\theta}$ be the event, that the Vietoris-Rips complex $\vietoris(\eta \cap (\theta + U_I),\rho)$ is combinatorially equivalent to $K$, i.e.,
		\begin{align*}
				A_{\theta} := \cbras*{ \vietoris(\eta \cap (\theta + U_I), \rho) \text{ is combinatorially equivalent to } K}.
		\end{align*}
		Then there exists a constant $c_A \in (0, 1]$ such that $\Prob*{A_{\theta}} \geq c_A$ for all $\theta \in \gamma \ZZ^d$.
		\begin{proof}
			Note that $\delta < \frac{\rho}{4}$ ensures that $B_\delta^d(v_L) \cap U_I = \emptyset$ and therefore, by using the same arguments as in the proof of Lemma \ref{lem:iso_a_theta}, we obtain the desired result for 
			\begin{align*}
				c_A := \rbras*{t \kappa_d \delta^d}^{n+1} \exp\rbras*{- t \Lambda_d (U_I)}.
			\end{align*}
		\end{proof}
	\end{lemma}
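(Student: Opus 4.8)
The plan is to mimic the proof of Lemma~\ref{lem:iso_a_theta} almost verbatim, with the surrounding box $W_I$ there replaced by the inner hull $U_I$ here. First I would record two elementary geometric facts. Since every $v\in V$ lies in $\conv(V)$ and $U_I=\conv(V)+\delta B_1^d$, the ball $B_\delta^d(v)=v+\delta B_1^d$ is contained in $U_I$; together with Remark~\ref{rem:choose_delta}(i) this makes the $n+1$ balls $B_\delta^d(\theta+v)$, $v\in V$, pairwise disjoint subsets of $\theta+U_I$. Secondly, the choice $\delta<\tfrac{\rho}{4}$ guarantees $B_\delta^d(v_L)\cap U_I=\emptyset$, so the linkage ball around $\theta+v_L$ contributes nothing inside $\theta+U_I$ and the event $A_\theta$ genuinely depends only on the restriction of $\eta$ to $\theta+U_I$.

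Next I would invoke Remark~\ref{rem:choose_delta}(ii): whenever $\dist(\eta\cap(\theta+U_I),\theta+V)<\delta$, the complex $\vietoris(\eta\cap(\theta+U_I),\rho)$ is combinatorially equivalent to $K$. Hence, exactly as in Lemma~\ref{lem:iso_a_theta},
\begin{align*}
  A_\theta &\supseteq \cbras*{\dist(\eta\cap(\theta+U_I),\theta+V)<\delta}\\
  &\supseteq \cbras*{\eta\rbras*{B_\delta^d(\theta+v)}=1 \text{ for all } v\in V}\cap\cbras*{\eta\rbras*{(\theta+U_I)\setminus\textstyle\bigcup_{v\in V}B_\delta^d(\theta+v)}=0},
\end{align*}
because if each vertex ball carries exactly one point of $\eta$ and no further point of $\eta$ lies in $\theta+U_I$, then the optimal bijection witnessing the distance moves every vertex of $V$ by strictly less than $\delta$.

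Finally I would use stationarity of $\eta$ to reduce to $\theta=0$ and the independence of $\eta$ over the pairwise disjoint sets $B_\delta^d(v)$ ($v\in V$) and $U_I\setminus\bigcup_{v\in V}B_\delta^d(v)$ to factorize, so that the Poisson law yields
\begin{align*}
  \Prob*{A_\theta} &\geq \rbras*{\prod_{v\in V}\Prob*{\eta\rbras*{B_\delta^d(v)}=1}}\cdot\Prob*{\eta\rbras*{U_I\setminus\textstyle\bigcup_{v\in V}B_\delta^d(v)}=0}\\
  &= \rbras*{t\kappa_d\delta^d}^{n+1}\exp\rbras*{-t\Lambda_d(U_I)} =: c_A > 0,
\end{align*}
where $\kappa_d$ denotes the volume of the $d$-dimensional unit ball and the exponential factors telescope via $\Lambda_d(U_I)=\Lambda_d\rbras*{U_I\setminus\bigcup_{v\in V}B_\delta^d(v)}+(n+1)\kappa_d\delta^d$; since the bound does not depend on $\theta$, the lemma follows. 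I do not expect a genuine obstacle here: the content is bookkeeping, and the only place that needs a moment of thought is the geometry of the first paragraph, namely checking $\bigcup_{v\in V}B_\delta^d(v)\subseteq U_I$ and $B_\delta^d(v_L)\cap U_I=\emptyset$, which is exactly what makes $U_I$ large enough to host a combinatorial copy of $K$ while keeping the probability computation identical to the isolated case of Lemma~\ref{lem:iso_a_theta}.
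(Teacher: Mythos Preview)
Your argument is correct and follows exactly the approach the paper takes: the paper's proof simply remarks that $\delta<\tfrac{\rho}{4}$ gives $B_\delta^d(v_L)\cap U_I=\emptyset$ and then refers back to the computation of Lemma~\ref{lem:iso_a_theta} with $W_I$ replaced by $U_I$, arriving at the same constant $c_A=(t\kappa_d\delta^d)^{n+1}\exp(-t\Lambda_d(U_I))$. You have merely unpacked those details explicitly.
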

	
	In the next step we will ensure, that the randomly generated simplicial complex $\vietoris(\eta \cap U_I, \rho)$ is forced to connect to $\eta \cap (W_O \setminus W_I)$ by the attachment vertex $v_A$ and the linkage vertex $v_L$. 
	Therefore we define the line $v_L(\zeta) = v_L + \zeta \vec{u}$, $\zeta \in [0, \infty)$ as continuation of the direct path from $v_A$ to $v_L$. 
	Note that we can choose a finite number of points $v_L(0) = v_L(\zeta_0), v_L(\zeta_1), \ldots, v_L(\zeta_N)$ on the line such that 
	\begin{enumerate}
		\item $v_L(\zeta_i) \in W_I$ for all $i = 0, \ldots, N$,
		\item it holds that $\norm{v_L(\zeta_i) - v_L(\zeta_{i-1})}_2 < \rho - 2 \delta$ for all $i = 1, \ldots, N$,
		\item $0 < \norm{v_L(\zeta_N) - z}_2 < \frac{\rho}{2\sqrt{d}}$ for some $z \in \partial W_I$.
	\end{enumerate}
	
	We will now use the translated versions of these points to construct the linkage from the random simplicial complex to the inner shell by defining the event, that in every ball with a positive but small radius $\delta'$ around these linkage points is exactly one point of the Poisson point process and that the remaining space in the inner box, that is not already covered by $U_I$ or by these balls is empty.
	Denote by $VL = \cbras*{v_L(0), v_L(\zeta_1), \ldots, v_L(\zeta_N)}$ the set of linkage points and choose
	\begin{align*}
		\delta' := \min\cbras*{ \delta, ~\min\limits_{z \in \partial W_I} \norm{v_L(\zeta_N) - z}_2, ~\frac{1}{2} \min\limits_{\substack{x,y \in VL \\ x \neq y }}\norm{x-y}_2 },
	\end{align*}
	to ensure, that all balls with radius $\delta'$ around the points in $VL$ are pairwise disjoint and laying completely in the inner shell $W_I \setminus U_I$. 
	Then we can define the event $A'_\theta$ to ensure that if $A'_\theta$ occurs, the connection from $\theta + v_A$ to the boundary of $\theta + W_I$ is established and we show that this event has a positive probability that is not depending on the translation $\theta \in \gamma \ZZ^d$:
	
	\begin{lemma}
		Let $A'_\theta$ denote the event that all points of $\eta$ that are laying in the space $\theta + (W_I \setminus U_I)$ are $\delta'$ close to the set $\theta + VL$, i.e.
		\begin{align*}
			A'_\theta := \cbras*{ \dist(\eta \cap (\theta + (W_I \setminus U_I)), \theta + VL) < \delta'}.
		\end{align*}
		Then there exists a constant $c_A' \in (0,1]$ such that $\Prob*{A'_\theta} \geq c_A'$ for all $\theta \in \gamma \ZZ^d$.
		\begin{proof}
			The cardinality of $VL$ is given by $N+1$ and the choice of $\delta'$ ensures, that we can rewrite this event in a similar manner like in the proof of the lemma before. It follows that
			\begin{align*}
				\Prob*{A'_\theta} = \Prob*{A'_0} & = \rbras*{ \prod\limits_{x \in VL} \Prob*{\eta\rbras*{B^d_{\delta'}(x)} = 1 } } \cdot \Prob*{\eta\rbras*{ (W_I \setminus U_I) \setminus \bigcup\limits_{x \in VL} B^d_{\delta'}(x)}}\\
								& = \rbras*{t \kappa_d \delta'^d}^{N+1} \exp\rbras*{- (N+1) t \kappa_d \delta'^d - t \Lambda_d(W_I \setminus U_I) + (N+1) t \kappa_d \delta'^d}\\
								& = \rbras*{t \kappa_d \delta'^d}^{N+1} \exp\rbras*{- t \Lambda_d(W_I \setminus U_I)} =: c'_A.
			\end{align*}
		\end{proof}
	\end{lemma}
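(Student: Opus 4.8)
The plan is to follow the template of Lemma~\ref{lem:iso_a_theta} essentially verbatim, since the event $A'_\theta$ has exactly the same shape as the events treated earlier: it prescribes a configuration of Poisson points inside a bounded region. First I would use the translation invariance of $\eta$ to set $\theta = 0$, so that it suffices to bound $\Prob*{A'_0}$ from below by a positive constant independent of $\theta$. Recall that, by the choice of $\delta'$, the $N+1$ open balls $B^d_{\delta'}(x)$, $x\in VL$, are pairwise disjoint and all contained in the shell $W_I\setminus U_I$.

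Next I would establish the inclusion
\[
  A'_0 \supseteq \cbras*{\eta\rbras*{B^d_{\delta'}(x)} = 1 \text{ for all } x\in VL} \cap \cbras*{\eta\rbras*{(W_I\setminus U_I)\setminus{\textstyle\bigcup_{x\in VL}} B^d_{\delta'}(x)} = 0}.
\]
Indeed, on the right-hand event the set $\eta\cap(W_I\setminus U_I)$ consists of exactly $N+1$ points, one in each ball $B^d_{\delta'}(x)$; since the balls are pairwise disjoint, sending each such point to the centre of the ball containing it defines a bijection onto $VL$ all of whose matched pairs lie at distance strictly less than $\delta'$. Hence $\dist(\eta\cap(W_I\setminus U_I),VL)<\delta'$ — note in particular that the two cardinalities agree, so this distance is finite and the cardinality clause of Definition~\ref{def:distance} is not an obstruction.

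Then I would apply the independence property of the Poisson point process on the pairwise disjoint Borel sets $B^d_{\delta'}(x)$, $x\in VL$, and $(W_I\setminus U_I)\setminus\bigcup_{x\in VL}B^d_{\delta'}(x)$, together with $\Prob*{\eta(B)=1}=t\Lambda_d(B)e^{-t\Lambda_d(B)}$, $\Prob*{\eta(B)=0}=e^{-t\Lambda_d(B)}$ and $\Lambda_d(B^d_{\delta'}(x))=\kappa_d\delta'^d$, to get
\[
  \Prob*{A'_0}\geq \rbras*{t\kappa_d\delta'^d}^{N+1} e^{-(N+1)t\kappa_d\delta'^d}\cdot e^{-t\rbras*{\Lambda_d(W_I\setminus U_I)-(N+1)\kappa_d\delta'^d}} = \rbras*{t\kappa_d\delta'^d}^{N+1} e^{-t\Lambda_d(W_I\setminus U_I)}.
\]
Since $W_I$ is a bounded box, $\Lambda_d(W_I\setminus U_I)$ is finite, so the right-hand side is a strictly positive constant $c'_A$; it is $\leq 1$ because it bounds a probability. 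By translation invariance the same bound holds for every $\theta\in\gamma\ZZ^d$, which is the assertion.

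I do not expect a genuine obstacle here; the routine exponential cancellation aside, the only points needing care are the verification of the displayed set inclusion through the precise definition of $\dist$ (in particular the built-in cardinality condition), and the already-secured fact that each ball $B^d_{\delta'}(x)$, $x\in VL$, sits inside the shell $W_I\setminus U_I$, so that the "empty" region in the inclusion is exactly the set complement used in the independence computation.
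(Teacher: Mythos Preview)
Your proposal is correct and follows essentially the same approach as the paper: translation invariance to reduce to $\theta=0$, the configuration event that each disjoint ball $B^d_{\delta'}(x)$ contains exactly one Poisson point with the rest of the shell empty, and the same Poisson computation yielding $c'_A=(t\kappa_d\delta'^d)^{N+1}\exp(-t\Lambda_d(W_I\setminus U_I))$. The only cosmetic difference is that the paper writes an equality where you write an inclusion $A'_0\supseteq\{\cdots\}$; in fact, because the balls are pairwise disjoint and $\dist$ forces matching cardinalities, the two events coincide, so your $\geq$ is actually an equality, but either way the constant is the same.
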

	
	We conclude the results of the previous construction in the following lemma:
	
	\begin{lemma}\label{lem:c_T}
		For $\theta \in \gamma \ZZ^d$ we define the event $T_\theta$ that the simplicial complex $\vietoris(\eta \cap (\theta + U_I), \rho)$ is combinatorially equivalent to $K$ and connected to a set of points in the outer shell $\theta + (W_O \setminus W_I)$ that are the center points of balls with radius $\rho$ that are a covering of the hitbox $\theta + W_H$. Then there exists a constant $c_T \in (0, 1]$ such that $\Prob*{T_\theta} \geq c_T$ for all $\theta \in \gamma \ZZ^d$.
		\begin{proof}
			It follows directly from our construction that the occurrence of $A_\theta$, $A_\theta'$ and $B_\theta$ at the same translation $\theta$ implies that also $T_\theta$ occurs at this translation $\theta$. Therefore it follows that
			\begin{align*}
				T_\theta \supseteq A_\theta \cap A_\theta' \cap B_\theta.
			\end{align*}
			Further the events $A_\theta$, $A_\theta'$ and $B_\theta$ are defined on the pairwise disjoint sets $\theta + U_I$, $\theta + (W_I \setminus U_I)$ and $\theta + (W_O \setminus W_I)$, thus it follows directly by the independence property of the Poisson point process $\eta$ that
			\begin{align*}
				\Prob*{T_\theta} \geq \Prob*{A_\theta \cap A_\theta' \cap B_\theta} = \Prob*{A_\theta} \cdot \Prob*{A_\theta'} \cdot \Prob*{B_\theta} \geq c_A c_A'c_B =: c_T.
			\end{align*}
		\end{proof}
	\end{lemma}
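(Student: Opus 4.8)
The plan is to dominate $T_\theta$ from below by the intersection $A_\theta\cap A_\theta'\cap B_\theta$ of the three ``good configuration'' events already estimated, and then to factor the probability of that intersection using the spatial independence of $\eta$; by translation invariance it suffices to treat $\theta=0$.

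First I would verify the purely set-theoretic inclusion
\[
T_\theta \;\supseteq\; A_\theta\cap A_\theta'\cap B_\theta .
\]
Suppose all three events occur. On $A_\theta$ the complex $\vietoris(\eta\cap U_I,\rho)$ is combinatorially equivalent to $K$. On $A_\theta'$ the points of $\eta$ in $W_I\setminus U_I$ lie one apiece in the pairwise disjoint $\delta'$-balls around the linkage points $VL=\{v_L(\zeta_0),\dots,v_L(\zeta_N)\}$; the spacing condition $\norm{v_L(\zeta_i)-v_L(\zeta_{i-1})}_2<\rho-2\delta$ together with $\delta'\le\delta$ forces consecutive such $\eta$-points to span an edge, so they form an edge-path that starts at the vertex $v_A$ of the inner complex (by Remark~\ref{rem:choose_delta}(iii)) and reaches within $\tfrac{\rho}{2\sqrt d}$ of $\partial W_I$, while Remark~\ref{rem:choose_delta}(iii) simultaneously forbids any edge from a linkage point to a vertex of $V^*$, so the inner complex stays undisturbed. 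On $B_\theta$ I would invoke Lemma~\ref{lem:lattice_covering}: the $\rho$-balls around the $\eta$-points in the outer shell $W_O\setminus W_I$ cover $W_H\setminus W_I$, hence the endpoint of the edge-path lies within distance $\rho$ of one of these outer-shell points, and this family of outer-shell points furnishes exactly the covering of the hitbox $W_H$ required in the definition of $T_\theta$. Thus all defining requirements of $T_\theta$ hold.

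Second, the events $A_\theta$, $A_\theta'$ and $B_\theta$ are measurable with respect to the restrictions of $\eta$ to the pairwise disjoint regions $\theta+U_I$, $\theta+(W_I\setminus U_I)$ and $\theta+(W_O\setminus W_I)$, so by the independence property of the Poisson point process they are independent. Together with the inclusion and the bounds $\Prob*{A_\theta}\ge c_A$, $\Prob*{A_\theta'}\ge c_A'$, $\Prob*{B_\theta}\ge c_B$ from the three preceding lemmas this yields
\[
\Prob*{T_\theta}\;\ge\;\Prob*{A_\theta\cap A_\theta'\cap B_\theta}\;=\;\Prob*{A_\theta}\,\Prob*{A_\theta'}\,\Prob*{B_\theta}\;\ge\;c_A c_A' c_B\;=:\;c_T ,
\]
which is strictly positive, is at most $1$ since it bounds a probability from below, and is independent of $\theta$ by translation invariance of $\eta$; the \v{C}ech case is identical.

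The one place that genuinely needs care is the geometric argument behind the inclusion, i.e.\ checking that the three ``good'' configurations jointly force both the combinatorial type of $\vietoris(\eta\cap U_I,\rho)$ and the existence of a hitbox-covering family of outer-shell centres joined to that complex by an edge-path through the $VL$-balls. Once the distance bookkeeping for the points $v_L(\zeta_i)$ and the covering statement of Lemma~\ref{lem:lattice_covering} are in hand, the probabilistic step is routine.
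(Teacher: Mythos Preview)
Your proposal is correct and follows essentially the same approach as the paper: establish the inclusion $T_\theta\supseteq A_\theta\cap A_\theta'\cap B_\theta$, then factor the probability via the spatial independence of $\eta$ on the disjoint regions $U_I$, $W_I\setminus U_I$, $W_O\setminus W_I$ to obtain $c_T:=c_Ac_A'c_B$. The only difference is that the paper dispatches the inclusion in one sentence (``follows directly from our construction''), whereas you spell out the edge-path and covering verification explicitly; your added detail is sound and arguably an improvement in exposition.
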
	
	
	To continue with the proof, we will investigate the event, that the giant unbounded component $\cB_\infty$ "meets" the box $W_H$ in a way, such that it is forced to connect to the points inside the box $\theta + W_H$, given $T_\theta$ occurs. We can show, that the event $H_\theta$ has a positive probability that is not depending on the translation $\theta \in \gamma \ZZ^d$:
	
	\begin{lemma}\label{lem:c_H}
		Denote by $H_\theta$ the event, that there exists a vertex of $\cB_\infty$ laying in $\theta + (W_H \setminus W_O)$. Then there exists a constant $c_H \in (0,1]$ such that $\Prob*{H_\theta} \geq c_H$ for all $\theta \in \gamma \ZZ^d$.
		\begin{proof}
			Assume, that $c_H=0$, then it follows from the translation in-variance of $\eta$ that $\Prob*{H_\theta} =  0$ for all translations $\theta \in \RR^d$. Further we can find a countable index set $I \subset \RR^d$ of translations such that $\bigcup_{\theta \in I} \rbras*{\cbras*{\theta} + (W_H \setminus W_O)} = \RR^d$, thus it follows
			\begin{align*}
				\Prob*{\exists x \in \cB_\infty} & = \Prob*{\exists x \in \cB_\infty \cap \bigcup\limits_{\theta \in I} \rbras*{\cbras*{\theta} + (W_H \setminus W_I)}}\\
				& \leq \Prob*{\exists x \in \bigcup\limits_{\theta \in I} \rbras*{ \cB_\infty \cap \rbras*{\cbras*{\theta} + (W_H \setminus W_I)}}}\\
				& \leq \sum\limits_{\theta \in I} \Prob*{\exists x \in \cB_\infty \cap \rbras*{\cbras*{\theta} + (W_H \setminus W_I)}} = \sum\limits_{\theta \in I} 0 = 0,
			\end{align*}
			which implies, that there does not exists a giant unbounded component in $\RR^d$ which is false, because we are in the case of percolation.
		\end{proof}
	\end{lemma}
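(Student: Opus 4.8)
The plan is to argue by contradiction, combining the stationarity of $\eta$ with the almost sure existence of the giant component $\cB_\infty$ in the percolating regime (Theorem~\ref{th:percolation}). First I would make the translation equivariance of the construction explicit: since $\cB_\infty$ is canonically determined by $\eta$, shifting $\eta$ by $-\theta$ shifts $\cB_\infty$ by $-\theta$, so for every Borel set $A\subseteq\RR^d$ and every $\theta\in\RR^d$ the event ``$\cB_\infty$ has a vertex in $A+\theta$'' has the same probability as ``$\cB_\infty$ has a vertex in $A$''. In particular $\Prob*{H_\theta}$ is independent of $\theta$ --- in fact for all $\theta\in\RR^d$, not merely for $\theta\in\gamma\ZZ^d$ --- so it suffices to prove that the common value $p:=\Prob*{H_0}$ is strictly positive and then set $c_H:=p\in(0,1]$.

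To prove $p>0$, suppose for contradiction that $p=0$. By the equivariance just recorded, for each individual $\theta\in\RR^d$ the complex $\cB_\infty$ almost surely has no vertex in $\theta+(W_H\setminus W_O)$. Since the shell $W_H\setminus W_O$ has nonempty interior --- by construction it contains a genuine $d$-dimensional box --- one can choose a countable set $I\subseteq\RR^d$ of translations with $\bigcup_{\theta\in I}\bigl(\theta+(W_H\setminus W_O)\bigr)=\RR^d$. Countable subadditivity then yields
\[
\Prob*{\cB_\infty \text{ has at least one vertex}} \;\le\; \sum_{\theta\in I}\Prob*{\cB_\infty \text{ has a vertex in } \theta+(W_H\setminus W_O)} \;=\; 0.
\]
But for $t>t_{\text{perc}}(\rho)$ Theorem~\ref{th:percolation} guarantees that $\cB_\infty$ exists almost surely, and an unbounded connected complex certainly has a vertex; this contradicts the displayed bound, so $p>0$.

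The only genuinely delicate point is the equivariance step, i.e. checking that ``there is a vertex of $\cB_\infty$ in $A$'' really does transform into ``there is a vertex of $\cB_\infty$ in $A+\theta$'' under the measure-preserving shift of $\eta$ by $\theta$. This is a soft fact --- belonging to the unbounded connected component of the random geometric graph is a translation-covariant functional of $\eta$ --- but it deserves to be spelled out, since it is precisely this covariance that makes $\Prob*{H_\theta}$ the same for every lattice point $\theta$, which is what the remainder of the proof of Theorem~\ref{thm:perc_vr} relies on. Everything else is routine: covering $\RR^d$ by countably many translates of a set with nonempty interior, countable subadditivity, and the almost sure existence of $\cB_\infty$.
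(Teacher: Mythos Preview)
Your argument is correct and follows essentially the same route as the paper: assume the probability vanishes, use stationarity to conclude it vanishes for every translate, cover $\RR^d$ by countably many translates of the shell $W_H\setminus W_O$, and derive via countable subadditivity that $\cB_\infty$ is almost surely empty, contradicting Theorem~\ref{th:percolation}. Your version is in fact slightly more explicit than the paper's in isolating the translation-covariance of $\cB_\infty$ as the point that makes $\Prob*{H_\theta}$ independent of $\theta$, and in noting that the shell has nonempty interior so that a countable cover exists.
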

	
	The construction of $\theta + W_H$ implies, that if $H_\theta$ and $T_\theta$ occur for the same translation $\theta \in \gamma \ZZ^d$, then the simplicial complex $\vietoris(\eta \cap (\theta + U_I), \rho)$ is combinatorially equivalent to $K$ and that it is connected to the giant unbounded component $\cB_\infty$.
	Note that the event $H_\theta$ depends only on the points of $\eta$ laying outside of $\theta + W_O$, and the event $T_\theta$ depends only on the points of $\eta$ inside of $\theta + W_O$, thus the events $H_\theta$ and $T_\theta$ are independent.
	In the remaining part of this proof, we will show, that the event $E_\theta = H_\theta \cap T_\theta$ occurs for infinitely many translations $\theta \in \gamma \ZZ^d$.
	Therefore we will use the following modified version of Borel-Cantelli that is stated in \cite{ReSchneider}:
	\begin{lemma}[Modified Borel-Cantelli]\label{lem:mod_borel_cantelli}
		Let $E_1, E_2, \ldots $ be a sequence of events on some probability space $(\Omega, \sF, \dsP)$ with $\sum\limits_{i = 1}^\infty \Prob*{E_i} = \infty$ and
		\begin{align}\label{eq:mod_borel_cantelli}
			\liminf\limits_{n \rightarrow \infty} \frac{\sum_{i,j = 1, i \neq j}^n \sbras*{\Prob*{E_i \cap E_j} - \Prob*{E_i} \Prob*{E_j}}}{\rbras*{\sum_{i = 1}^n \Prob*{E_j}}^2} = 0.
		\end{align}
		Then $\Prob*{\limsup\limits_{i \rightarrow \infty} E_i} = 1$.
	\end{lemma}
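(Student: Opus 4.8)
The plan is to derive the statement from the second moment method; the assertion is, in essence, the Kochen--Stone refinement of the Borel--Cantelli lemma, and one can equivalently argue via the Chung--Erd\H{o}s inequality.

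Concretely, I would set $S_n := \sum_{i=1}^n \1_{E_i}$ and $m_n := \E{S_n} = \sum_{i=1}^n \Prob{E_i}$, so that $m_n \to \infty$ by hypothesis. Expanding the square and splitting off the diagonal,
\[
  \E{S_n^2} = \sum_{i=1}^n \Prob{E_i} + \sum_{i,j=1, i\neq j}^n \Prob{E_i\cap E_j} = m_n + D_n + \sum_{i,j=1, i\neq j}^n \Prob{E_i}\Prob{E_j} \leq m_n + m_n^2 + D_n,
\]
where $D_n := \sum_{i,j=1,i\neq j}^n\bigl(\Prob{E_i\cap E_j}-\Prob{E_i}\Prob{E_j}\bigr)$ is precisely the numerator appearing in \eqref{eq:mod_borel_cantelli} (here I used $\sum_{i\neq j}\Prob{E_i}\Prob{E_j}\leq m_n^2$).

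Next, since $S_n$ vanishes off $\{S_n>0\}$, the Cauchy--Schwarz inequality gives $m_n^2 = \bigl(\E{S_n\1_{\{S_n>0\}}}\bigr)^2 \leq \E{S_n^2}\,\Prob{S_n>0}$, hence
\[
  \Prob*{S_n > 0} \geq \frac{m_n^2}{\E{S_n^2}} \geq \rbras*{\tfrac{1}{m_n} + 1 + \tfrac{D_n}{m_n^2}}^{-1}.
\]
Condition \eqref{eq:mod_borel_cantelli} supplies a subsequence $(n_k)$ with $D_{n_k}/m_{n_k}^2 \to 0$, and since $m_{n_k}\to\infty$ the right-hand side tends to $1$ along $(n_k)$, so $\Prob{S_{n_k}>0}\to 1$. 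Because $\{S_n>0\}=\bigcup_{i=1}^n E_i$ increases with $n$, continuity of $\dsP$ from below gives $\Prob{\bigcup_{i=1}^\infty E_i} = \lim_k \Prob{S_{n_k}>0} = 1$. To pass from the union to $\limsup_i E_i$, I would note that discarding the first $N-1$ events still leaves $\sum_{i\geq N}\Prob{E_i} = \infty$, changes $D_n$ by at most $4Nm_n = o(m_n^2)$ (each deleted row $i<N$ contributing $\sum_j \abs{\Prob{E_i\cap E_j}-\Prob{E_i}\Prob{E_j}} \leq 2m_n$), and alters $\bigl(\sum_{i\geq N}\Prob{E_i}\bigr)^2$ only by a factor $1+o(1)$; thus the $\liminf$ condition \eqref{eq:mod_borel_cantelli} persists for the tail $(E_i)_{i\geq N}$, and the preceding argument applied to it yields $\Prob{\bigcup_{i\geq N} E_i}=1$ for every $N$, whence $\Prob{\limsup_i E_i} = \Prob{\bigcap_N \bigcup_{i\geq N} E_i} = 1$.

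The computations are routine; the only step I expect to demand a little attention is this last one, namely verifying that deleting an initial segment perturbs the numerator in \eqref{eq:mod_borel_cantelli} only at order $o(m_n^2)$, so that the $\liminf$ hypothesis indeed transfers to every tail of the sequence.
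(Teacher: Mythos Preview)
Your argument is correct: the Chung--Erd\H{o}s/Paley--Zygmund second-moment bound gives $\Prob{S_n>0}\geq m_n^2/\E{S_n^2}$, and your decomposition $\E{S_n^2}\leq m_n+m_n^2+D_n$ together with the $\liminf$ hypothesis yields $\Prob{\bigcup_i E_i}=1$ along a subsequence; the tail-shift step is also fine, since indeed $|D_n-D_n^{(N)}|=O(m_n)=o(m_n^2)$ and $(m_n^{(N)})^2=m_n^2(1+o(1))$, so the same subsequence works after deleting any initial block. (One reassuring observation you could add: $\mathrm{Var}(S_n)\geq 0$ forces $D_n\geq -m_n$, so the quantity $1+1/m_n+D_n/m_n^2$ in your denominator is eventually positive regardless, and the $\liminf$ hypothesis is really just $\liminf\leq 0$.)

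As for comparison with the paper: the paper does not prove this lemma at all. It simply quotes the statement from Reitzner--Schneider \cite{ReSchneider} and uses it as a black box. Your write-up therefore supplies strictly more than the paper does here, and the route you take---the Kochen--Stone refinement via Cauchy--Schwarz---is the standard one for results of this type.
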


	Note that $E_{\theta_1}$ and $E_{\theta_2}$ are not independed for $\theta_1, \theta_2 \in \gamma \ZZ^d$, but there exists a series of translations $(\theta_i)_{i \in \NN_0}$ such that \eqref{eq:mod_borel_cantelli} holds for the Events $(E_{\theta_i})_{i \in \NN_0}$.
	
	\begin{lemma}\label{lem:prob_dependence_bound}
		Let $\theta_0 := 0$ and $\theta_i := \theta_{i-1} + \gamma_1 e_1 k_i$, then there exist a series of integers $k_i \in \NN$ such that
		\begin{align*}
			\abs{\Prob*{E_i \cap E_j} - \Prob*{E_i}\Prob*{E_j}} \leq \frac{1}{i^2},
		\end{align*}
		for all $i,j \in \NN_0$ with $j > i$.
		\begin{proof}
			For $i \in \NN$ and sufficiently large $R > 0$ we define the translation $\theta_R := \theta_{i-1} + R e_1$ and the two half-spaces
			\begin{align*}
				\bbH^- := \cbras*{\sskp*{x,e_1} \leq \frac{\norm*{\theta_{i-1} + \theta_R}}{2}},
			\end{align*}
			and 
			\begin{align*}
				\bbH^+ := \cbras*{\sskp*{x,e_1} > \frac{\norm*{\theta_{i-1} + \theta_R}}{2}},
			\end{align*}
			where $e_1$ denotes the first element of the standard basis of $\RR^d$. Note that $\bbH^-$ and $\bbH^+$ are delimited by the hyperplane that is orthogonal to the connecting line from $\theta_{i-1}$ to $\theta_R$ and contains the center of this line.
			
			We denote by $\eta^\pm := \eta \cap \bbH^\pm$ the restrictions of $\eta$ to the half-spaces. 
			Observe that $\eta^+$ and $\eta^-$ are two independent Poisson point processes and since there is percolation, we have an unbounded connected component $C^\pm$ in each halfspace.
			Further, these unbounded components $C^\pm$ connect to the single giant component $\cB_\infty$ in the space $\RR^d = \bbH^+ \cup \bbH^-$.
			
			Denote by $D^-$ the event, that $\theta_i + W_H$ connects to $C^-$ withing $\bbH^-$ and by $D^+$ the event, that $\theta_R + W_H$ connects to $C^+$ within $\bbH^+$.
			Then the independence property of the Poisson point process first shows that $D^+$ and $D^-$ are independent, and further that
			\begin{align}\label{eq:d_independence}
				\Prob*{E_{\theta_i} \cap E_{\theta_R} \cap (D^- \cap D^+)} = \Prob*{E_{\theta_i} \cap D^-}\Prob*{E_{\theta_R}  \cap D^+}.
			\end{align}
			Furthermore the event $E_{\theta_i}$ occurs if $\theta_i + W_H$ connects to $C^- \cup C^+$. 
			If in addition $D^-$ holds, then it connects even within $C^-$. 
			In other words, $E_{\theta_i} \cap (D^-)^C$ only occurs if $\theta_i + W_H$ connects to $C^- \cup C^+$ via $C^+$. 
			But this implies, that there is a component $\tilde{C}$ connecting $\theta_i + W_H$ to $\bbH^+$ and avoiding $C^-$. 
			For $R \rightarrow \infty$ this means that $\tilde{C}$ is a second unbounded connected component in the half space $\bbH^-$ which avoids $C^-$, which has probability zero.
			Hence
			\begin{align}\label{eq:d_m_limit}
				\lim\limits_{R \rightarrow \infty}\Prob*{E_{\theta_i} \cap (D^-)^C} = 0
			\end{align}
			and by the translation and rotation in-variance it follows, that
			\begin{align}\label{eq:d_p_limit}
				\lim\limits_{R \rightarrow \infty} \Prob*{E_{\theta_R} \cap (D^+)^C} = 0.
			\end{align}
			Combining these results yields
			\begin{align*}
				&\abs*{\Prob*{E_{\theta_i} \cap E_{\theta_R}} - \Prob*{E_{\theta_i}} \Prob*{E_{\theta_R}}}\\
				= & \left\vert \Prob*{E_{\theta_i} \cap E_{\theta_R} \cap (D^- \cap D^+)} + \Prob*{E_{\theta_i} \cap E_{\theta_R} \cap (D^- \cap D^+)^C} \right.\\
				& \left. - \rbras*{\Prob*{E_{\theta_i} \cap D^-} + \Prob*{E_{\theta_i} \cap (D^-)^C}} \rbras*{\Prob*{E_{\theta_R} \cap D^+} + \Prob*{E_{\theta_R} \cap (D^+)^C}}\right\vert\\
				\leq & \abs*{\Prob*{E_{\theta_i} \cap E_{\theta_R} \cap (D^- \cap D^+)} - \Prob*{E_{\theta_i} \cap D^-}\Prob*{E_{\theta_R}  \cap D^+}}\\
					& + \Prob*{E_{\theta_i} \cap E_{\theta_R} \cap (D^- \cap D^+)^C} + \Prob*{E_{\theta_i} \cap D^-}\Prob*{E_{\theta_R} \cap (D^+)^C}\\
					& + \Prob*{E_{\theta_i} \cap (D^-)^C}\Prob*{E_{\theta_R} \cap D^+} + \Prob*{E_{\theta_i} \cap (D^-)^C}\Prob*{E_{\theta_R} \cap (D^+)^C}.
			\end{align*}
			Using \eqref{eq:d_independence} eliminates the first term. Further we can bound one factor in each of the last three terms by $1$ and rewrite the complement in the second term which leads us to
			\begin{align*}
				&\abs*{\Prob*{E_{\theta_i} \cap E_{\theta_R}} - \Prob*{E_{\theta_i}} \Prob*{E_{\theta_R}}}\\
				\leq & \Prob*{E_{\theta_i} \cap E_{\theta_R} \cap ((D^-)^C \cup (D^+)^C)}\\
					& + \Prob*{E_{\theta_R} \cap (D^+)^C} + 2\Prob*{E_{\theta_i} \cap (D^-)^C}\\
				\leq & \Prob*{E_{\theta_i} \cap E_{\theta_R} \cap (D^-)^C} + \Prob*{E_{\theta_i} \cap E_{\theta_R} \cap (D^+)^C}\\
					& + \Prob*{E_{\theta_R} \cap (D^+)^C} + 2\Prob*{E_{\theta_i} \cap (D^-)^C}\\
				\leq & \Prob*{E_{\theta_i} \cap (D^-)^C} + \Prob*{ E_{\theta_R} \cap (D^+)^C}\\
					& + \Prob*{E_{\theta_R} \cap (D^+)^C} + 2\Prob*{E_{\theta_i} \cap (D^-)^C}\\
				\leq & 2\Prob*{E_{\theta_R} \cap (D^+)^C} + 3\Prob*{E_{\theta_i} \cap (D^-)^C}.
			\end{align*}
			Thus \eqref{eq:d_m_limit} and \eqref{eq:d_p_limit} imply
			\begin{align*}
				\lim\limits_{R \rightarrow \infty} \abs*{\Prob*{E_{\theta_i} \cap E_{\theta_R}} - \Prob*{E_{\theta_i}} \Prob*{E_{\theta_R}}} = 0.
			\end{align*}
			Therefore, for every $i \in \NN_0$ we can find a constant $R_i \in \RR_{>0}$ such that $\abs*{\Prob*{E_{\theta_i} \cap E_{\theta_R}} - \Prob*{E_{\theta_i}} \Prob*{E_{\theta_R}}} < \tfrac{1}{i^2}$ for every $R > R_i$. 
			Define $k_i := \ceil*{\tfrac{R_i}{\gamma}}$, then the claim holds for all $i, j \in \NN_0$ with $j > i$.
		\end{proof}
	\end{lemma}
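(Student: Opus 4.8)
\emph{Proof plan.} Write $E_i := E_{\theta_i} = H_{\theta_i} \cap T_{\theta_i}$, and recall (as noted before Lemma~\ref{lem:mod_borel_cantelli}) that $T_{\theta_i}$ depends only on the points of $\eta$ inside $\theta_i + W_O$, while $H_{\theta_i}$ depends only on the points outside it; thus the entire correlation between $E_i$ and $E_j$ is produced by the long-range behaviour of the giant component $\cB_\infty$. The plan is to show that this correlation decays with the separation of $\theta_i$ and $\theta_j$, and then to choose the $k_i$ so that the separations grow fast enough. By stationarity of $\eta$, the quantity $\Prob*{E_i \cap E_j} - \Prob*{E_i}\Prob*{E_j}$ depends only on $\theta_j - \theta_i = \gamma_1(k_{i+1} + \dots + k_j)\,e_1$, so in absolute value it equals $\varepsilon(k_{i+1} + \dots + k_j)$, where
\[
  \varepsilon(m) := \abs*{\Prob*{E_0 \cap E_{\gamma_1 m e_1}} - \Prob*{E_0}\Prob*{E_{\gamma_1 m e_1}}}.
\]
Granting $\varepsilon(m) \to 0$ as $m \to \infty$, it then suffices to let $k_{i+1}$ (for $i \geq 1$) be the least positive integer with $\varepsilon(m) \leq \tfrac{1}{i^2}$ for all $m \geq k_{i+1}$, and to take $k_1$ arbitrary (pairs with $i = 0$ impose no constraint): since $k_{i+1} + \dots + k_j \geq k_{i+1}$ whenever $j > i$, the claimed bound follows.

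It remains to prove $\varepsilon(m) \to 0$, and here I would cut $\RR^d$ by the hyperplane $\bbH$ orthogonal to $e_1$ through the midpoint of the segment joining $0$ and $\theta' := \gamma_1 m e_1$, labelling the open half-spaces $\bbH^-, \bbH^+$ so that, once $m$ is large, all of $U_I, W_I, W_O, W_H$ lie in $\bbH^-$ and all of their $\theta'$-translates lie in $\bbH^+$. The restrictions $\eta^\pm := \eta \cap \bbH^\pm$ are independent Poisson point processes, each percolating since $t > t_{\text{perc}}(\rho)$, so each $\bbH^\pm$ carries a unique unbounded cluster $C^\pm$. Introduce the localized events $D^-$ (``$W_H$ connects to $C^-$ within $\bbH^-$'') and $D^+$ (``$\theta' + W_H$ connects to $C^+$ within $\bbH^+$''). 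Then $E_0 \cap D^-$ is decided by $\eta^-$ and $E_{\theta'} \cap D^+$ by $\eta^+$, hence they are independent, and $E_0 \cap E_{\theta'} \cap D^- \cap D^+ = (E_0 \cap D^-) \cap (E_{\theta'} \cap D^+)$ factorizes. Splitting the two probabilities in $\varepsilon(m)$ over $D^\pm$ and $(D^\pm)^C$, the factorized main terms cancel, and bounding spare factors by $1$ leaves $\varepsilon(m) \leq 2\Prob*{E_{\theta'} \cap (D^+)^C} + 3\Prob*{E_0 \cap (D^-)^C}$.

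The decisive step, which I expect to be the main obstacle, is that $\Prob*{E_0 \cap (D^-)^C} \to 0$ as $m \to \infty$; by translation and rotation invariance the analogous statement for $\Prob*{E_{\theta'} \cap (D^+)^C}$ follows, and then $\varepsilon(m) \to 0$. On $E_0$ the box $W_H$ is joined to the unique giant $\cB_\infty$, while on $(D^-)^C$ this junction does not take place inside $\bbH^-$; hence, by uniqueness of the infinite cluster in the half-space $\bbH^-$ (a Burton--Keane argument), the cluster of $W_H$ within $\bbH^-$ is finite and its path to $\cB_\infty$ must cross $\bbH$, reaching distance of order $m/2$. As $m \to \infty$ this would, in the limit, produce a second unbounded cluster in $\bbH^-$ disjoint from $C^-$, which happens with probability zero; since $D^-$ is increasing in $m$ (the half-space $\bbH^-$ grows), continuity from above turns this into $\Prob*{E_0 \cap (D^-)^C} \downarrow 0$. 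The points requiring care are the measurability assertions — one must enlarge the half-spaces enough that the full translated configuration of Remark~\ref{rem:choose_delta} lies on the correct side, and verify that $D^-$ together with $T_0$ renders $E_0$ decidable from $\eta^-$ alone — together with the explicit appeal to uniqueness of the infinite cluster in a half-space.
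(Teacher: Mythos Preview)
Your proposal is correct and follows essentially the same route as the paper: split $\RR^d$ by a hyperplane through the midpoint, introduce the localized half-space events $D^\pm$, use the independence of $\eta^\pm$ to factorize $E_0\cap E_{\theta'}\cap D^-\cap D^+$, bound the covariance by $2\Prob*{E_{\theta'}\cap(D^+)^C}+3\Prob*{E_0\cap(D^-)^C}$, and argue that these vanish via uniqueness of the infinite cluster in a half-space. Your preliminary reduction by stationarity to a single function $\varepsilon(m)$ of the separation, together with the monotonicity remark that $D^-$ is increasing in $m$, is a mild tidying of the paper's argument (which works directly with $\theta_{i-1}$ and a moving $\theta_R$ and has a small indexing wobble), but the substance is the same.
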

	
	Finally we can apply Lemma \ref{lem:mod_borel_cantelli}, the modified version of Borel-Cantelli, to the sequence of events $(E_{\theta_i})_{i \in \NN_0}$ given by the translation sequence $(\theta_i)_{i \in \NN_0}$ constructed in Lemma \ref{lem:prob_dependence_bound}.
	To shorten the notation, we set $E_i := E_{\theta_i}$.
	
	Let $c_E := c_T c_H$, then it follows directly from the independence of $T_\theta$ and $H_\theta$ together with Lemma \ref{lem:c_T} and Lemma \ref{lem:c_H} that
	\begin{align*}
		\sum\limits_{i = 1}^n \Prob*{E_i} \geq  n c_E,
	\end{align*}
	and thus
	\begin{align*}
		\sum\limits_{i = 1}^\infty \Prob*{E_i} = \infty.
	\end{align*}
	Further, using Lemma \ref{lem:prob_dependence_bound} and the symmetry in $i$ and $j$ we have
	\begin{align*}
		\sum\limits_{i,j = 1, i \neq j}^n \abs*{\Prob*{E_i \cap E_j} - \Prob*{E_i} \Prob*{E_j}} & = 2\sum\limits_{i = 1} ^n \sum\limits_{j = i + 1}^n \abs*{\Prob*{E_i \cap E_j} - \Prob*{E_i} \Prob*{E_j}}\\
			 \leq 2\sum\limits_{i = 1}^n \sum\limits_{j = i+1}^n \frac{1}{i^2}\\
			 \leq 2n \sum\limits_{i = 1}^n \frac{1}{i^2} \leq \frac{\pi^2}{3} n.
	\end{align*}
	Thus for $c_E := c_T c_H$ it follows that
	\begin{align*}
		\frac{\sum_{i,j = 1, i \neq j}^n \abs*{\Prob*{E_i \cap E_j} - \Prob*{E_i} \Prob*{E_j}}}{\rbras*{\sum_{i = 1}^n \Prob*{E_j}}^2} \leq \frac{\pi^2n}{3(n c_E)^2} = \frac{\pi^2}{3c_E^2} \cdot \frac{1}{n}
	\end{align*}
	and therefore
	\begin{align*}
		\lim\limits_{n \rightarrow \infty} \abs*{ \frac{\sum_{i,j = 1, i \neq j}^n \sbras*{\Prob*{E_i \cap E_j} - \Prob*{E_i} \Prob*{E_j}}}{\rbras*{\sum_{i = 1}^n \Prob*{E_j}}^2} } \leq \lim\limits_{n \rightarrow \infty} \frac{\pi^2}{3c_E^2} \cdot \frac{1}{n} = 0,
	\end{align*}
	which implies \eqref{eq:mod_borel_cantelli} and Lemma \ref{lem:mod_borel_cantelli} yields
	\begin{align*}
		\Prob*{\limsup\limits_{i \rightarrow \infty} E_i} = 1,
	\end{align*}
	and thus
	\begin{align*}
		\Prob*{E_{\theta_i} \text{ for infinitly many } i \in \NN } = 1
	\end{align*}
	which completes the proof of Theorem \ref{thm:perc_vr}. The proof of \ref{thm:perc_c} is similar.
\end{proof}

\bigskip

{\bf Acknowledgement:} The authors gratefully acknowledge support
from the DFG in form of the Research Training Group 1916 ``Combinatorial
Structures in Geometry''.

\bibliographystyle{mrei}


\end{document}